\newcommand{\ba}{\begin{array}}
\newcommand{\eea}{\end{eqnarray}}
\newcommand{\ea}{\end{array}}
\newtheorem{definition}{Definition}[section]
\newtheorem{theorem}[definition]{Theorem}
\newtheorem{lemma}[definition]{Lemma}
\newtheorem{proposition}[definition]{Proposition}
\title{Symplectic A-directed immersions}
\author{Sauvik Mukherjee}
\begin{document}
\maketitle

\section{Introduction} Let $V$ and $W$ be smooth manifolds of dimensions $n$ and $q$ respectively with $n<q$ and let $\pi: Gr_n(W)\rightarrow W$ be the Grassmanian bundle of $n$-planes tangent to $W$, i.e $\pi^{-1}(x)=Gr_n(T_xW)$ for $x\in W$. For a given vector bundle monomorphism $F:TV\rightarrow TW$, define $GF:V\rightarrow Gr_nW$ given by \[x\longmapsto F_x(T_xV)\] Now for an immersion $f:V\rightarrow W$ we call $Gdf$ the tangential lift of $f$. Let $A\subset Gr_n(W)$ be an arbitrary subset then an immersion $f:V\rightarrow W$ is called A-directed if $Gdf(V)\subset A$. In \cite{MR864505} Gromov proved the following theorem using Convex Integration thechnique which is known as A-directed embedding.\\

\begin{theorem}(A-directed embedding)
 \label{GAE}
Let $V$ and $W$ be as above with $V$ an open manifold. Let $A\subset Gr_n(W)$ be an open set and $f_0:V\hookrightarrow W$ be an embedding such that the tangential lift $G_0=Gdf_0$ is homotopic to a map $G_1:V\rightarrow A\subset Gr_n(W)$ through a homotopy $G_t:V\rightarrow Gr_n(W)$ such that $\pi \circ G_t=f_0$, then $f_0$ can be isotoped to an A-directed embedding $f_1: V\rightarrow W$. Moreover given a core $K\subset V$, the isotopy $f_t$ can be chosen arbitrarily $C^0$-close to $f_0$ on $Op(K)$.
\end{theorem}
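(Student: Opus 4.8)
The plan is to recast $A$-directedness as an open differential relation and then run convex integration, using the given homotopy to supply the formal data and the openness of $V$ to supply the geometric room. First I would let $\cR_A\subset J^1(V,W)$ be the set of $1$-jets $(x,y,L)$ with $L\colon T_xV\to T_yW$ a monomorphism and $L(T_xV)\in A$. Because $A$ is open and the immersion condition is open, $\cR_A$ is an open subset of $J^1(V,W)$, and since it depends only on the image plane it is invariant under the pseudogroup of local diffeomorphisms of $V$. A genuine $A$-directed immersion is precisely a holonomic section of $\cR_A$, while $j^1f_0$ need not lie in $\cR_A$; the whole point is to deform $j^1f_0$ to a holonomic section that does.

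Next I would extract a formal solution from the hypothesis. The homotopy $G_t$ with $\pi\circ G_t=f_0$ is exactly a path of $n$-plane fields $P_t(x)\subset T_{f_0(x)}W$ from $P_0=df_0(TV)$ to an $A$-valued field $P_1$. Covering this path by monomorphisms $F_t\colon TV\to TW$ over $f_0$ with $F_t(T_xV)=P_t(x)$, which is possible because assigning to a monomorphism its image plane is a fibration so the path $P_t$ lifts starting from $F_0=df_0$, yields a homotopy of formal solutions from $j^1f_0$ to a section of $\cR_A$. This is the formal solution that convex integration must make holonomic.

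Then I would carry out the convex integration inductively over $V$, and here I expect the main obstacle. The relation $\cR_A$ is \emph{not} ample in general: fixing all but one derivative direction, the admissible values of the remaining derivative form a cone over the open set of lines determining planes in $A$, and a connected component of this cone can have convex hull only a half-space, so the plain implication ``open $+$ ample $\Rightarrow$ h-principle'' does not apply. The two inputs that rescue the argument are (i) the homotopy $G_t$, which forces the \emph{formal} principal value to already lie in the convex hull of the relevant path-component of the slice, so that convex integration relative to this formal solution produces the enveloping loops it needs; and (ii) the openness of $V$, which lets me pick a proper Morse function without local maxima and hence retract $V$ onto a subcomplex of dimension $<n$, doing the construction handle-by-handle with a spare direction at every stage. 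Convex integration then yields an $A$-directed immersion $f_1$ that is $C^0$-close to $f_0$, and localizing the construction near $Op(K)$ makes the isotopy $C^0$-small there.

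Finally I would upgrade the immersion to an embedding. Because the isotopy is $C^0$-small it stays inside a fixed tubular neighborhood of the embedded submanifold $f_0(V)$; monitoring injectivity through the normal projection of that neighborhood, in which $f_0$ is recovered as the foot-point map, keeps each intermediate map injective, while the openness of $A$ leaves room for the small corrections that maintain embeddedness and directedness simultaneously. The resulting $f_1$ is the desired $A$-directed embedding, $C^0$-close to $f_0$ on $Op(K)$. The genuinely delicate points, to be handled carefully in the full proof, are the verification that the homotopy places the formal solution inside the correct convex hulls and the preservation of injectivity throughout the large derivative oscillations.
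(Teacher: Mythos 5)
Your reduction of $A$-directedness to the open, $Diff(V)$-invariant relation $\cR_A\subset J^1(V,W)$ and your lift of the plane-field homotopy $G_t$ to monomorphisms $F_t$ covering $f_0$ are both fine. The proof collapses, however, exactly at the point you flag and then wave away: $\cR_A$ is not ample, and your rescue (i) is not an argument. A formal solution is the \emph{input} of every convex-integration scheme, never a substitute for ampleness: ampleness is precisely the condition that the principal value one must realize lies in the interior of the convex hull of the relevant path-component of each principal slice, and when it fails there are simply no enveloping loops to integrate, no matter what formal homotopy is available. A one-dimensional example already kills the plan: $V=\R\subset W=\R^2$, with $A$ the lines making angle between $10^\circ$ and $80^\circ$ with the horizontal axis; each component of the slice is an open convex cone whose hull is a proper cone, the horizontal derivative of $f_0$ lies outside it, and convex integration can produce nothing — yet the theorem is true, because the conclusion is achieved by genuinely moving the curve, not by $C^0$-small oscillation. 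The correct mechanism for open $V$ is the holonomic-approximation route, which is also the architecture this paper itself uses for its symplectic analogue: approximate the formal solution $(f_0,F_1)$ near a positive-codimension core $K$ by a genuine $A$-directed map $\tilde f$ on $Op(\tilde K)$, $\tilde K=h^1(K)$ a $\delta$-small perturbation of $K$ (this is the role played by Theorem \ref{MAL}, via Theorem \ref{MHAT}), then compress $V$ into $Op(\tilde K)$ by an isotopy $g_t$ (the role of Theorem \ref{DI}) and set $f_1=\tilde f\circ g_1$, which is still $A$-directed because $Gd(\tilde f\circ g_1)(x)=Gd\tilde f(g_1(x))\in A$ for a diffeomorphism $g_1$ onto its image. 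This globalization step — the only way a construction localized near the core ever reaches all of $V$, and the only place the openness of $V$ is actually used — is entirely absent from your proposal; your ``handle-by-handle with a spare direction'' remark does not supply it.

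The second gap is the upgrade to embeddings. $C^0$-closeness to an embedding does \emph{not} preserve injectivity: an arbitrarily $C^0$-small curl creates double points, and this is exactly the regime convex integration puts you in, since its output has rapidly oscillating derivatives that are nowhere near $df_0$; in particular the composition of your output with the tubular-neighborhood projection need not be injective, or even immersive, so ``monitoring injectivity through the normal projection'' has no force unless you impose the additional constraint that the deformed map stays graphical over $f_0(V)$, which neither $C^0$-closeness nor membership in $\cR_A$ gives you. (This difficulty is the entire reason the cited paper of Rourke and Sanderson exists.) On the holonomic-approximation route the problem disappears: there the \emph{whole} $1$-jet of $\tilde f$ is $\varepsilon$-close to the formal data, i.e. $d\tilde f$ is uniformly close to a fixed continuous family of monomorphisms over the compact set $Op(\tilde K)$, which yields a uniform local injectivity radius, while $C^0$-closeness to the embedding $f_0$ rules out distant double points; together these make $\tilde f$, hence $f_1=\tilde f\circ g_1$, an embedding. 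So the theorem for open $V$ should be proved as an approximation-plus-compression statement, not a convex-integration one, and as written your proposal cannot be completed.
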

 
Alternative proof of the theorem has been provided in \cite{MR1909245} and \cite{MR1833750}. In this paper we have presented a proof of a symplectic analogue of this theorem. For $V$ closed \ref{GAE} is not true in general but under a completeness condition on $A$,  \ref{GAE} becomes true, see \cite{MR1909245} for a proof.\\

Let us conclude this section by stating the main theorem of this paper. Let $(V,\omega_V)$ and $(W,\omega_W)$ be symplectic manifolds of dimensions $2n$ and $2q$ respectively with $n<q$. Let $Sp_{2n}(W)\subset Gr_{2n}(W)$ consists of symplectic $2n$-planes tangential to $W$, and $\pi:Sp_{2n}(W)\rightarrow W$ be the bundle.\\

\begin{theorem}
 \label{SAI}
Let $(V,\omega_V=d\alpha_V)$ be an open exact symplectic manifold and $(W,\omega_W=d\alpha_W)$ be an exact symplectic manifold of dimensions $2n$ and $2q$ respectively with $n<q$. Let $A\subset Sp_{2n}(W)$ be an open subset and let $f_0:V\rightarrow W$ be a symplectic immersion i.e, $f_0$ is an immersion satisfying $f_0^*(\omega_W)=\omega_V$. Further assume that the tangential lift $G_0=Gdf_0:V \rightarrow Sp_{2n}(W)$ admits a homotopy $G_t:V \rightarrow Sp_{2n}(W)$ such that $\pi \circ G_t=f_0$ and $G_1:V\rightarrow A\subset Sp_{2n}(W)$ then $f_0$ admits a homotopy of isosymplectic immersions $f_t$ such that $f_1$ is A-directed. Moreover given a core $K\subset V$, the homotopy $f_t$ can be chosen arbitrarily $C^0$-close to $f_0$ on $Op(K)$.
\end{theorem}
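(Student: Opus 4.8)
The plan is to deduce the theorem from a parametric $h$-principle for the differential relation that encodes simultaneously the isosymplectic condition and the directing condition, with the homotopy $G_t$ supplying the formal data.

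First I would convert the geometric hypothesis into a formal solution. The family $G_t$ is a homotopy of symplectic $2n$-planes lying over the fixed map $f_0$ (since $\pi\circ G_t=f_0$). Consider the bundle over $V$ whose fibre at $x$ is the space $\mathrm{Mon}^{\omega}(T_xV,T_{f_0(x)}W)$ of symplectic monomorphisms $L$ with $L^*\omega_W=\omega_V$, equipped with the map $L\mapsto L(T_xV)\in Sp_{2n}(W)$; its fibre over a symplectic plane $P$ is the space of symplectic isomorphisms $(T_xV,\omega_V)\to(P,\omega_W|_P)$, a torsor under the connected group $Sp(2n)$. This map is a Serre fibration, and $df_0$ is a lift of $G_0$ (here $f_0$ symplectic gives $df_0^*\omega_W=\omega_V$ and $\mathrm{im}\,df_0=G_0$). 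By the homotopy lifting property I obtain a homotopy $F_t:TV\to TW$ of symplectic monomorphisms over $f_0$ with $F_0=df_0$ and $F_t(T_xV)=G_t(x)$. Thus $(f_0,F_1)$ is a \emph{formal isosymplectic immersion directed into} $A$, and $(f_0,F_t)$ is a formal isosymplectic homotopy joining it to the holonomic datum $(f_0,df_0)$.

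Next I would set up the relation $\mathcal R=\mathcal R_{sympl}\cap\mathcal R_A\subset J^1(V,W)$, where $\mathcal R_{sympl}$ consists of injective $1$-jets with $L^*\omega_W=\omega_V$ and $\mathcal R_A$ of those with $L(T_xV)\in A$. The goal is a parametric $h$-principle for $\mathcal R$, relative to $t=0$: the formal homotopy $(f_0,F_t)$ should be deformable, rel $t=0$, to a genuine homotopy $f_t$ of isosymplectic immersions with $f_1$ being $A$-directed and with $C^0$-control over $Op(K)$. The engine is Gromov's convex-integration scheme for isosymplectic immersions. Two structural facts make it run: the codimension $2(q-n)\ge 2$ is positive, providing normal directions in $W$ along which the pulled-back form can be altered; and the exactness $\omega_V=d\alpha_V$, $\omega_W=d\alpha_W$ kills the cohomological obstruction and lets one phrase the correction through the primitives, replacing the equality of $2$-forms by an affine, hence ample, condition on the derivative in those normal directions. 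Openness of $A$ enters at the last stage: since $(f_0,F_1)\in\mathcal R_A$ and $A$ is open, the convex-integration perturbations can be taken of controlled amplitude so that the tangent planes of the resulting map remain in $A$, i.e. the genuine solution stays in the open sub-relation $\mathcal R_A$.

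The part I expect to be the main obstacle is precisely the interaction of the two constraints. The isosymplectic relation $\mathcal R_{sympl}$ is \emph{closed} and not ample, so plain convex integration does not apply to it; one must use the special isosymplectic correction and then verify that it is compatible with (i) keeping every intermediate map isosymplectic, since the conclusion asks for a homotopy \emph{through} isosymplectic immersions and not merely through immersions as \ref{GAE} would yield, (ii) keeping the tangent planes inside the open set $A$ at the final time, and (iii) retaining $C^0$-smallness on $Op(K)$ over the open manifold $V$. Concretely, the difficulty is to show that the successive wiggles that fix $f^*\omega_W=\omega_V$ can be performed within $\mathcal R_A$ and without displacing the map on the core, that is, that $\mathcal R$ is ample in the coordinate directions transverse to the directing distribution determined by $A$. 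Establishing this ampleness, using the positive codimension and the exact primitives, is the crux; once it is in place, the parametric and relative refinements follow the standard pattern underlying \ref{GAE}.
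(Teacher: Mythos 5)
Your reduction of the hypothesis to a formal solution (lifting $G_t$ to a homotopy of symplectic bundle monomorphisms $F_t$ with $F_0=df_0$ via the fibration argument) is fine, and something of this kind is implicitly used in Step-1 of the paper's proof of \ref{MAL}. But the engine you propose to run afterwards does not exist, and you essentially say so yourself: the isosymplectic relation is a \emph{closed} subset of $J^1(V,W)$, convex integration applies to open ample relations, and your plan is to ``establish ampleness'' of $\mathcal{R}_{sympl}\cap\mathcal{R}_A$ in suitable principal directions. No ampleness statement can rescue a non-open relation --- openness is a separate, unremovable hypothesis of the convex integration method --- so what you defer as ``the crux'' is precisely the entire content of the theorem, and the route you indicate for it would fail. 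The paper's mechanism at this point is completely different: exactness of $\omega_V$ and $\omega_W$ is used to lift the problem to \emph{Legendrian} immersions into $Op(V)\times W\times \mathbb{R}$ (after first thickening $V$ to a symplectic neighborhood $Op(V)$ inside the symplectic normal bundle, Proposition \ref{RAT}); the Legendrian immersion relation is shown to be locally integrable and microflexible (Lemma \ref{LIMFL}); and then the Eliashberg--Mishachev holonomic approximation theorem \ref{MHAT} for such relations, invariant under the capacious group of Hamiltonian diffeomorphisms, produces holonomic approximations --- but only on a neighborhood $Op(\tilde K)$ of a slightly displaced core, since \ref{MHAT} requires a polyhedron of positive codimension. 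Openness of $A$ then enters exactly once and trivially: a sufficiently $\varepsilon$-close approximation of $G_1$ has tangential lift landing in $A$.

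There is a second structural gap: your sketch never uses openness of $V$ in an essential way (you invoke it only for $C^0$-control near $K$, which is not where it is needed). If your parametric, relative scheme worked as described, it would apply verbatim to closed $V$; but the statement is false for closed $V$ without a completeness hypothesis on $A$, which is exactly why the paper's Section 3 and Theorem \ref{GCM} require $m$-completeness. In the paper, openness of $V$ is consumed by a globalization step that is entirely absent from your proposal: the holonomic approximation only yields $\tilde f_t$ on $Op(\tilde K)$, and one must then invoke the Datta--Islam theorem \ref{DI} to obtain a homotopy of isosymplectic immersions $g_t:V\rightarrow V$ with $g_0=id_V$ and $g_1(V)\subset Op(\tilde K)$, so that (up to concatenating with $f_0\circ g_t$) the global answer is $f_t=\tilde f_t\circ g_1$. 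Without a compression step of this kind there is no way to pass from solutions defined near a core to a homotopy defined on all of $V$.
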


\section{Main Theorem} In this section we will present a proof of \ref{SAI}. First we need an approximation result which we state bellow.\\

\begin{theorem}(Main Approximation Lemma)
 \label{MAL}
$(V,\omega_V=d\alpha_V)$ be an open exact symplectic manifold as above and let $K\subset V$ be a polyhedron of positive codimension and $G_t:V\rightarrow Sp_{2n}(W)$ be a symplectic tangential homotopy, where $(W,\omega_W=d\alpha_W)$ as above is exact symplectic, such that $G_0=Gdf$ for some isosymplectic immersion $f:V\rightarrow W$ and $\pi \circ G_t=f$. Then for arbitrarily small $\delta,\varepsilon >0$ there exists a $\delta$-small symplectic diffeotopy $h^{\tau}:V\rightarrow V$, $\tau \in [0,1]$ and a homotopy of symplectic immersions \[\tilde{f}_t:Op(\tilde{K})\rightarrow W\] where $\tilde{K}=h^1(K)$ and $\tilde{f}_0=(f_0)_{\mid Op_V\tilde{K}}$, such that the homotopy $Gd\tilde{f}_t$ is $\varepsilon$-close to the tangential homotopy $(G_t)_{\mid Op_V\tilde{K}}$. 
\end{theorem}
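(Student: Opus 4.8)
The plan is to read \ref{MAL} as a symplectically constrained, parametric version of the Holonomic Approximation Theorem near the positive-codimension polyhedron $K$, and to split the argument into a \emph{flexible} part (approximating the tangent planes, which is the mechanism behind \ref{GAE}) and a \emph{rigid} part (restoring the exact isosymplectic condition $\tilde f_t^*\omega_W=\omega_V$), the latter being the genuinely symplectic step. First I would pass from the homotopy of planes to a homotopy of formal solutions: for each $x$ the plane $G_t(x)$ is a symplectic $2n$-plane in $(T_{f(x)}W,\omega_W)$, and $(T_xV,\omega_V)$ is symplectic of the same dimension, so the set of symplectic isomorphisms $(T_xV,\omega_V)\to(G_t(x),\omega_W|_{G_t(x)})$ is a nonempty torsor over the connected group $Sp(2n)$. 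Hence $G_t$ lifts to a homotopy $F_t:TV\to TW$ of isosymplectic bundle monomorphisms covering $f$, normalized by $F_0=df$. The goal becomes: near a symplectically wiggled copy $\tilde K=h^1(K)$ of $K$, build isosymplectic immersions $\tilde f_t$ with $\tilde f_0=f|_{Op_V\tilde K}$ whose derivatives realize $F_t$ closely, so that $Gd\tilde f_t$ is $\varepsilon$-close to $G_t$.

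Next I would triangulate $K$ and choose Darboux charts on $V$ and on $W$ so that $\omega_V=\sum dx_i\wedge dy_i$ and $\omega_W=\sum du_j\wedge dv_j$, and apply the (non-symplectic) Holonomic Approximation Theorem parametrically in $t$ — the same engine underlying the proofs of \ref{GAE} in \cite{MR1909245,MR1833750}. Because $K$ has positive codimension, it can be pushed to a nearby wrinkled polyhedron $\tilde K$ in whose thin neighborhood the formal family $F_t$ is approximated by genuine $1$-jets; concretely one inserts a high-frequency, small-amplitude corrugation in a direction transverse to $K$ whose derivative traces a loop from $df$ toward $F_1$. I would realize the wiggling $K\rightsquigarrow\tilde K$ by the flow of a small compactly supported time-dependent Hamiltonian, so that the resulting diffeotopy $h^{\tau}$ is symplectic and $\delta$-small. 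This produces immersions $g_t$ on $Op_V\tilde K$ with $g_0=f$ and $Gd g_t$ already $\tfrac{\varepsilon}{2}$-close to $G_t$, but only \emph{approximately} isosymplectic.

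The main obstacle is upgrading ``approximately'' to ``exactly'' isosymplectic without destroying the $\varepsilon$-closeness, and this is where exactness is essential. Set $\beta_t:=g_t^{*}\alpha_W-\alpha_V$ on $Op_V\tilde K$. Then $\eta_t:=g_t^{*}\omega_W-\omega_V=d\beta_t$ is automatically exact, it is small because $Gd g_t$ is close to the symplectic planes $G_t$, and $\eta_0=0$. I would then run Moser's trick on the family $\omega_V+s\eta_t$, $s\in[0,1]$: solving $\iota_{X_t^{s}}(\omega_V+s\eta_t)=-\beta_t$ gives a time-dependent vector field whose flow yields diffeomorphisms $\phi_t$ of $Op_V\tilde K$ with $\phi_0=\mathrm{id}$ and $\phi_t^{*}(g_t^{*}\omega_W)=\omega_V$, so that $\tilde f_t:=g_t\circ\phi_t$ is \emph{exactly} isosymplectic. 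The delicate quantitative point is that $\phi_t$ is $C^1$-small only when $\beta_t$ is small together with its first derivative, so I must control the oscillatory symplectic defect in $C^1$; taking the corrugation frequency $\lambda$ large and integrating by parts makes the defect and the relevant derivatives $O(1/\lambda)$, which keeps $\phi_t$ $C^1$-close to the identity and hence $Gd\tilde f_t$ within $\varepsilon$ of $G_t$. Working on the neighborhood of the compact $\tilde K$ guarantees that the Moser flow exists for all $\tau,t\in[0,1]$.

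Finally I would globalize over the triangulation: using a partition of unity subordinate to the charts above, the local isosymplectic immersions assemble into $\tilde f_t$ on $Op_V\tilde K$ and the local Hamiltonian wigglings compose into a single $\delta$-small symplectic diffeotopy $h^{\tau}$ with $\tilde K=h^1(K)$, while exactness again lets me match the primitives $\beta_t$ across overlaps so that the glued map stays isosymplectic. By construction $\tilde f_0=f|_{Op_V\tilde K}$ and $Gd\tilde f_t$ is $\varepsilon$-close to $(G_t)|_{Op_V\tilde K}$, which is the assertion of \ref{MAL}. I expect the Moser correction of the third paragraph, and in particular the $C^1$-control of the symplectic defect, to be the crux of the argument, the rest being standard holonomic-approximation bookkeeping adapted to the symplectic category.
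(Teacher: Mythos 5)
Your strategy --- ordinary holonomic approximation in Darboux charts followed by a Moser-type correction --- is genuinely different from the paper's, but the Moser step, which you yourself identify as the crux, contains a real gap, and it is precisely the point where this ``approximate, then correct'' approach to symplectic $h$-principles breaks down. The primitive you feed into Moser, $\beta_t=g_t^{*}\alpha_W-\alpha_V$, is not small: $C^0$-closeness of $dg_t$ to $F_t$ makes the two-form $\eta_t=d\beta_t$ $C^0$-small, but $\beta_t$ itself is $C^0$-approximated by the one-form $v\mapsto \alpha_W(F_t v)-\alpha_V(v)$, which for a general formal homotopy $F_t$ is of order one (closeness of $Gdg_t$ to $G_t$ constrains $d\beta_t$, not $\beta_t$). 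Consequently the field solving $\iota_{X_t^{s}}(\omega_V+s\eta_t)=-\beta_t$ is not small, $\phi_t$ is far from the identity, and the $\varepsilon$-closeness of $Gd\tilde f_t$ to $G_t$ is destroyed; moreover $\beta_0=f^{*}\alpha_W-\alpha_V$ is closed but in general nonzero, so $\phi_0\neq \mathrm{id}$ and your $\tilde f_0=f\circ\phi_0$ violates the required initial condition $\tilde f_0=f_{\mid Op_V\tilde K}$. You could instead take some $C^0$-small primitive of $\eta_t$ (Poincar\'e lemma with bounded constants), but then the genuine difficulty appears: to keep $Gd\tilde f_t$ close to $Gdg_t$ you need $\phi_t$ to be $C^1$-close to the identity, hence $X^s_t$ to be $C^1$-small, while the first derivatives of $\eta_t$ (and of any primitive produced by a zero-order solution operator) blow up like the corrugation frequency $\lambda$. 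This is a loss-of-derivatives problem, and your one-line fix (``integrating by parts makes the defect and the relevant derivatives $O(1/\lambda)$'') is an unsupported claim, not a proof: it would require showing that the specific corrugation defect admits a primitive of amplitude $O(\varepsilon/\lambda)$ with derivative $O(\varepsilon)$, uniformly in $t$ and in the parameters of the construction. This is exactly the obstruction that leads the paper (following Gromov and Eliashberg--Mishachev) to avoid any a posteriori correction: in Proposition \ref{RAT} the problem is lifted to the contactization $Op(V)\times W\times\R$ of $(Op(V)\times W,\tilde\omega\oplus(-\omega_W))$, where candidate isosymplectic immersions become Legendrian graphs; the Legendrian immersion relation is locally integrable and microflexible (Lemma \ref{LIMFL}), so Theorem \ref{MHAT} applies and produces sections that are \emph{exactly} holonomic solutions of the Legendrian relation, which project back to exactly isosymplectic immersions --- no Moser step is ever needed.

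Two further gaps. First, your globalization is not valid as stated: isosymplectic immersions cannot be assembled with a partition of unity, since the relation $f^{*}\omega_W=\omega_V$ is not convex in $f$, and ``matching the primitives across overlaps'' is not an argument; in the paper this issue never arises because \ref{MHAT} is applied to the whole polyhedron $K$ at once, the cell-by-cell patching being internal to that cited theorem and carried out in the flexible Legendrian setting. Second, you assert that the wiggling $K\rightsquigarrow\tilde K$ can simply be realized by a small Hamiltonian flow so that $h^{\tau}$ is symplectic; in the paper this comes from the ``capacious subgroup'' refinement of \ref{MHAT} (Definition \ref{CAP}), using that $Ham(Op(V))$ is capacious and that the Legendrian relation is invariant under lifted Hamiltonian diffeomorphisms. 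By contrast, your first step --- lifting the homotopy of planes $G_t$ to a homotopy of isosymplectic bundle monomorphisms $F_t$ with $F_0=df$ via connectedness of $Sp(2n)$ and the homotopy lifting property --- is correct, and corresponds to the reduction the paper performs (in subdivided form) when it deduces \ref{MAL} from \ref{RAT}.
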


We need a theorem from \cite{MR1909245} for the proof of \ref{MAL} which involves the following concepts.\\

\begin{definition}(Local Integrability)
 \label{LI}
 A differential relation $\mathcal{R}\subset J^r(V,W)$ is called (parametrically) locally integrable if given a map $h:I^k\rightarrow V$, a family of sections \[F_p:h(p)\rightarrow \mathcal{R},\ p\in I^k\] and a family local holonomic extensions near $\partial I^k$ \[\tilde{F}_p:Op(h(p))\rightarrow \mathcal{R},\ \tilde{F}_p(h(p))=F_p(h(p)),\ p\in Op(\partial I^k)\] there exists a family of local holonomic extensions \[\tilde{F}_p:Op(h(p))\rightarrow \mathcal{R},\ \tilde{F}_p(h(p))=F_p(h(p)),\ p\in I^k\] such that for $p\in Op(\partial I^k)$ these new extensions coinside with the original extensions over $Op(h(p))$.
\end{definition}

\begin{definition}(Microflexibility)
 \label{MF}
 Let $K^m=[-1,1]^m$. For a fixed $n$ and any $k<n$ we denote by $\theta_k$ the pair $(K^n,K^k\cup \partial K^n)$. Let $V$ be an n-dimentional manifold. A pair $(A,B)\subset V$ is called a $\theta_k$ pair if $(A,B)$ is diffeomorphic to the standard pair $\theta_k$.\\
 
 A differential relation $\mathcal{R}\subset J^r(V,W)$ is called (parametrically) $k$-microflexible if for any sufficiently small open ball $U\subset V$ and any families parametrized by $p\in I^m$ of 
 \begin{itemize}
  \item $\theta_k$-pairs $(A_p,B_p)\subset U$,\\
  \item holonomic sections $F^0_p:Op(A_p)\rightarrow \mathcal{R}$ and \\
  \item holonomic homotopies $F^{\tau}_p:Op(B_p)\rightarrow \mathcal{R},\ \tau\in [0,1]$, of the sections $F^0_p$ over $Op(B_p)$ which are constant over $Op(\partial B_p)$ for all $p\in I^m$ and constant over $Op(B)$ for $p\in Op(\partial I^m)$\\
 \end{itemize}
there exists a number $\sigma >0$ and a family of holonomic homotopies \[F^{\tau}_p:Op(A_p)\rightarrow \mathcal{R},\ \tau\in [0,\sigma],\] which extend the family of homotopies \[F^{\tau}_p:Op(B_p)\rightarrow \mathcal{R},\ \tau\in [0,\sigma],\] and are constant over $Op(\partial A_p)$ for all $p\in I^m$ and constant over $Op(A)$ for $p\in Op(\partial I^m)$.\\

A differential relation $\mathcal{R}\subset J^r(V,W)$ is called microflexible if $\mathcal{R}$ is $k$-microflexible for all $k=0,1,2,...,n-1$, where $n=dim(V)$.
\end{definition}

\begin{definition}\cite{MR1909245}
 \label{CAP}
 Let $\mathcal{U}\subset Diff(V)$ be a lie subgroup of the group of compactly supported diffeomorphisms of $V$, and $\mathcal{A}$ be its lie algebra. We call $\mathcal{U},\ (\mathcal{A})$ "capacious" if it satisfies the following conditions\\
 
 \begin{itemize}
  \item for any $v\in \mathcal{A}$, any compact set $A\subset V$ and its neighborhood $U\supset A$ there exists a vector field $\tilde{v}_{A,U}\in \mathcal{A}$ which is supported in $U$ and which coinsides with $v$ on $A$.\\
  \item given any tangent hyperplane $\tau \subset T_xV,\ x\in V$, there exists a vector field $v\in \mathcal{A}$ which is transversal to $\tau$.\\
 \end{itemize}
 Moreover the above conditions need to be satisfied parametrically with respect to a compact parameter space.
 \end{definition}

It has been mentioned in \cite{MR1909245} that the relation of Legendrian immersions is locally integrable and microflexible. Let us prove this. We need a lemma from \cite{MR1909245} to prove this.\\

\begin{lemma}(Contact Stability)
 \label{CS}
Let $\xi_t,\ t\in I$, be a family of contact structures on a neighborhood $Op(A)\subset M$ of a compact set $A\subset M$. Then there exists an isotopy of $\phi_t:Op(A)\rightarrow M$, fixed on $A$ such that $\phi_t^*\xi_0=\xi_t,\ t\in I$.
\end{lemma}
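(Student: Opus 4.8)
The plan is to prove Lemma~\ref{CS} by the Moser deformation method, that is, by the contact analogue of Gray's stability theorem localized at the compact set $A$. After shrinking $Op(A)$ I would first fix a smooth family of contact forms $\alpha_t$ on $Op(A)$ with $\ker\alpha_t=\xi_t$; such a coorienting family exists over a neighbourhood of the compact set $A$ and may be chosen to depend smoothly on $t\in I$. Rather than attack $\phi_t^*\xi_0=\xi_t$ head on, which would entangle the two forms $\alpha_0$ and $\alpha_t$ in a single equation, I would construct an isotopy $\psi_t$ satisfying $\psi_t^*\alpha_t=\lambda_t\,\alpha_0$ for positive functions $\lambda_t$ (hence $\psi_t^*\xi_t=\xi_0$) and then pass to $\phi_t:=\psi_t^{-1}$; since the inverse of a diffeotopy fixing $A$ again fixes $A$, and $\phi_t^*\xi_0=(\psi_t^{-1})^*\psi_t^*\xi_t=\xi_t$, this yields the assertion.

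Seeking $\psi_t$ as the flow of a time-dependent vector field $X_t$, with $\tfrac{d}{dt}\psi_t=X_t\circ\psi_t$ and $\psi_0=\mathrm{id}$, and differentiating $\psi_t^*\alpha_t=\lambda_t\alpha_0$ in $t$, I can cancel the pullback $\psi_t^*$ and reduce the problem to the pointwise linear equation $\mathcal{L}_{X_t}\alpha_t+\dot\alpha_t=\nu_t\alpha_t$ on $Op(A)$, where $\nu_t$ is an auxiliary unknown function. Decomposing $X_t=H_tR_t+Y_t$, with $R_t$ the Reeb field of $\alpha_t$, $Y_t\in\xi_t$ and $H_t=\alpha_t(X_t)$, Cartan's formula gives $\mathcal{L}_{X_t}\alpha_t=dH_t+\iota_{Y_t}d\alpha_t$. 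Pairing with $R_t$ pins down $\nu_t=R_t(H_t)+\dot\alpha_t(R_t)$, and restricting to $\xi_t$ leaves $\iota_{Y_t}d\alpha_t|_{\xi_t}=-\dot\alpha_t|_{\xi_t}-dH_t|_{\xi_t}$; since $d\alpha_t$ is nondegenerate on $\xi_t$, this determines $Y_t$ uniquely once the contact Hamiltonian $H_t$ is chosen. Thus every choice of $H_t$ produces a solution $X_t$, and integrating it over the compact interval $I$, shrinking $Op(A)$ if the flow tries to leave it, produces $\psi_t$.

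The real content, and the step I expect to be the main obstacle, is the requirement that $\phi_t$ be fixed on $A$, i.e.\ $X_t|_A=0$, which must coexist with $\xi_t|_A$ genuinely varying in $t$: at $a\in A$ one needs $\psi_t(a)=a$ while the linearization $d_a\psi_t$ carries $\xi_0|_a$ onto $\xi_t|_a$, so $X_t$ must vanish on $A$ yet have a nontrivial $1$-jet along $A$. I would handle this in two stages. First, a preliminary jet adjustment: choose a diffeotopy $g_t$ of $Op(A)$ that is fixed on $A$ and whose derivative $d_ag_t$ carries $\xi_t|_a$ onto $\xi_0|_a$ for every $a\in A$ (a soft step, realizing a smooth family of linear isomorphisms of $T_aM$ by jets of diffeomorphisms fixing $A$), and replace $\xi_t$ by $(g_t)_*\xi_t$, so that we may assume $\xi_t|_A=\xi_0|_A$ as plane fields along $A$. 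Then the forms satisfy $\alpha_t|_A=c_t\,\alpha_0|_A$ pointwise on $A$, whence $\dot\alpha_t|_{\xi_t}$ vanishes along $A$; taking $H_t\equiv 0$ in the construction above forces $Y_t|_A=0$ and hence $X_t|_A=0$, while the transverse derivatives of $X_t$ supply exactly the rotation of the contact planes demanded by $d_a\psi_t$. Composing $g_t$ with the resulting Moser isotopy and inverting as indicated gives the desired $\phi_t$. Throughout, working only with germs along the compact set $A$ is essential, since it permits repeated shrinking of $Op(A)$ and a uniform existence time for the flow; this realizes Lemma~\ref{CS} as the localized Gray stability statement used in \cite{MR1909245}.
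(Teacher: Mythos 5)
The paper itself contains no proof of Lemma \ref{CS}: it is quoted from \cite{MR1909245} and used as a black box, so your proposal can only be measured against the standard Gray--Moser argument, which is exactly what your second paragraph reproduces, and reproduces correctly. The reduction of $\psi_t^*\alpha_t=\lambda_t\alpha_0$ to the infinitesimal equation $\mathcal{L}_{X_t}\alpha_t+\dot\alpha_t=\nu_t\alpha_t$, the splitting $X_t=H_tR_t+Y_t$, the determination of $Y_t$ from the nondegeneracy of $d\alpha_t$ on $\xi_t$, the integration of $X_t$ on a possibly smaller neighborhood of the compact set $A$, and the passage to $\phi_t=\psi_t^{-1}$ are all sound. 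So is the key observation that if $\xi_t|_A=\xi_0|_A$ then $\dot\alpha_t|_{\xi_t}$ vanishes along $A$, so that the choice $H_t\equiv 0$ forces $X_t|_A=0$ and hence an isotopy fixed on $A$.

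The gap is your ``preliminary jet adjustment'', and it cannot be repaired, because the statement in the generality you are aiming at is false. A diffeotopy $g_t$ of $Op(A)$ fixing $A$ pointwise cannot have arbitrarily prescribed differentials along $A$: if $v\in T_aM$ is tangent to a curve contained in $A$, then necessarily $d_ag_t(v)=v$, so $d_ag_t$ can carry $\xi_t|_a$ onto $\xi_0|_a$ only when $T_aA\cap\xi_t|_a\subset\xi_0|_a$. The same constraint applies to the sought $\phi_t$ itself. Concretely, on $M=\mathbb{R}^3$ let $\xi_t=\ker\bigl(dz+(1-t-y)\,dx\bigr)$, a contact structure for every $t$, and let $A=[0,1]\times\{0\}\times\{0\}$. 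At $a\in A$ one has $\partial_x\in T_aA$ and $\partial_x\in\xi_1|_a=\ker(dz)$, but $\partial_x\notin\xi_0|_a=\ker(dz+dx)$; any $\phi_1$ fixing $A$ pointwise satisfies $d_a\phi_1(\partial_x)=\partial_x$, while $\phi_1^*\xi_0=\xi_1$ would force $d_a\phi_1(\partial_x)\in d_a\phi_1(\xi_1|_a)=\xi_0|_a$, a contradiction. The statement as transcribed in the paper has silently dropped the hypothesis, present in the local stability theorems of \cite{MR1909245}, that the structures $\xi_t$ coincide at the points of $A$; note that this hypothesis does hold where the paper applies the lemma (in Lemma \ref{LIMFL} it is used with $A=\{y\}$ and $(\eta_t)_y$ independent of $t$). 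So the correct course is: add the hypothesis $\xi_t|_A=\xi_0|_A$, delete your first stage entirely, and keep your second stage, which is then a complete proof. One smaller caveat: a global coorienting family $\alpha_t$ on $Op(A)$ presupposes coorientability of $\xi_0$ there, which is not automatic near a general compact set; either assume it, or observe that the Moser field $Y_t$ is unchanged under $\alpha_t\mapsto f\alpha_t$ for nonvanishing $f$, so that locally chosen forms define a global $X_t$.
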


\begin{lemma}
 \label{LIMFL}
 Let $M$ be any manifold of dimension $n$ and $(N,\xi)$ be a contact manifold with dimension $2n+1$, then $\mathcal{R}_{Leg}$, the relation of Legendrian immersions from $M$ to $N$ is locally integrable and microflexible.
\end{lemma}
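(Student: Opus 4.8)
The plan is to treat both properties as purely local statements and to exploit the Darboux normal form for contact structures. Since local integrability and microflexibility are conditions imposed on germs of holonomic sections over small balls $U\subset M$, I would first fix a point and invoke Darboux's theorem to identify a neighborhood in $N$ with the standard contact space $(\R^{2n+1},\ker\alpha_0)$, where $\alpha_0=dz-\sum_{i=1}^n y_i\,dx_i$. In this model a holonomic section of $\mathcal{R}_{Leg}$ is precisely a germ of a Legendrian immersion $g\colon Op(x_0)\to\R^{2n+1}$, and a point of $\mathcal{R}_{Leg}$ over $x_0$ is a pair consisting of $w_0\in\R^{2n+1}$ together with an injective isotropic (hence Lagrangian inside $\xi_{w_0}$) linear map $L\colon T_{x_0}M\to\xi_{w_0}$.

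For local integrability I would integrate the prescribed jets by means of generating functions. Over the region where the Legendrian $n$-plane $L(T_{x_0}M)$ projects isomorphically onto the $x$-space, a Legendrian germ is the $1$-graph $y=\partial S/\partial x$, $z=S(x)$ of a function $S$, and the prescribed section fixes the $1$-jet of $S$ at $x_0$; choosing $S$ to be, say, the second-order Taylor polynomial determined by $L$ produces a holonomic extension realizing the given section. To obtain the parametric statement matching the boundary extensions $\tilde F_p$ for $p\in Op(\partial I^k)$, I would observe that the generating functions realizing a fixed $1$-jet form an affine, hence contractible, family, so a cutoff interpolation in the parameter $p$ between the boundary-prescribed generating functions and the freshly constructed ones yields the required family $\tilde F_p$, $p\in I^k$, agreeing with the old extensions near $\partial I^k$. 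Degenerate front directions are handled by passing to a generating family with auxiliary fibre variables, which does not disturb the interpolation argument.

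For microflexibility I would realize the prescribed holonomic homotopy by an ambient contact isotopy. Given the holonomic section $F^0_p$ over $Op(A_p)$, read as a Legendrian immersion $g^0_p$, and the holonomic homotopy $F^\tau_p$ over $Op(B_p)$, read as a family $g^\tau_p$ of Legendrian immersions, the infinitesimal deformation of a Legendrian is governed by its contact Hamiltonian: the velocity field $\partial_\tau g^\tau_p$ along $g^0_p\!\mid_{B_p}$ is determined by the function $H^\tau_p=\alpha(\partial_\tau g^\tau_p)$ pulled back to $B_p$. I would extend $H^\tau_p$ from $Op(B_p)$ to $Op(A_p)$ by a cutoff that is smoothly parametrized in $p$ and vanishes near $\partial A_p$ and for $p\in Op(\partial I^m)$, integrate the associated contact vector field to a contact isotopy $\Phi^\tau_p$ of $N$ defined for $\tau\in[0,\sigma]$ with $\sigma>0$ small, and set $\tilde F^\tau_p:=\Phi^\tau_p\circ g^0_p$. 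Since each $\Phi^\tau_p$ is a contactomorphism, $\tilde F^\tau_p$ is automatically holonomic, it extends the given homotopy over $Op(B_p)$, and it meets the required constancy over $Op(\partial A_p)$ and for $p\in Op(\partial I^m)$; the Contact Stability Lemma \ref{CS} supplies the straightening needed to make $\Phi^\tau_p\circ g^0_p$ reproduce $g^\tau_p$ on $Op(B_p)$ rather than merely to first order. The restriction $\tau\in[0,\sigma]$, forced by keeping the flow inside the Darboux chart and preserving the immersion condition, is precisely the $\sigma$ appearing in the definition of microflexibility.

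The main obstacle I anticipate is the microflexibility step, and specifically the simultaneous bookkeeping there of three constraints: the extended contact Hamiltonian must integrate to a holonomic deformation, it must genuinely reproduce the prescribed homotopy (not just its derivative) on $Op(B_p)$, and it must remain constant over $Op(\partial A_p)$ and over $Op(A)$ for boundary parameters, all uniformly in $p\in I^m$. Reconciling the exact agreement on $Op(B_p)$ with the cutoff that enforces constancy near $\partial A_p$ is where Lemma \ref{CS} does the essential work, and care is needed to choose $\sigma$ uniformly over the compact parameter space $I^m$ so that every flow remains defined and immersive throughout.
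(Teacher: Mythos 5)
Your treatment of local integrability is a legitimate alternative to the paper's: you build the holonomic extensions explicitly via generating functions in Darboux charts, using the affine structure on the space of generating functions with a fixed $2$-jet to do the parametric boundary matching, whereas the paper realizes the prescribed jet by an arbitrary immersion germ, encloses its image tangent planes in an auxiliary contact structure $\xi'$ with $\xi'_y=\xi_y$, and straightens $\xi'$ back to $\xi$ with the stability Lemma \ref{CS}. Modulo the usual care with non-graphical front directions (which you acknowledge), that half of your proposal is sound.

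The microflexibility half has a genuine gap. Your construction $\tilde F^\tau_p=\Phi^\tau_p\circ g^0_p$ realizes the extension by an \emph{ambient} contact isotopy, and you claim it extends the given homotopy over $Op(B_p)$. But the $g^\tau_p$ are Legendrian \emph{immersions}, not embeddings, and an arbitrary deformation of an immersed Legendrian is not induced by any ambient isotopy: if $g^0_p(b_1)=g^0_p(b_2)=w$ is a double point, the prescribed homotopy may separate the two branches, $g^\tau_p(b_1)\neq g^\tau_p(b_2)$ for $\tau>0$, while $\Phi^\tau_p\circ g^0_p$ necessarily keeps both preimages on the single flow line through $w$. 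For the same reason your contact Hamiltonian $H^\tau_p=\alpha(\partial_\tau g^\tau_p)$ is only a function on the domain $Op(B_p)$; at a double point whose branches have different prescribed velocities it does not descend to a well-defined function on the image, so there is no ambient contact vector field to integrate in the first place. This is exactly the failure of the isotopy extension theorem for immersions, and Lemma \ref{CS} cannot repair it: that lemma straightens families of contact structures and says nothing about realizing immersion deformations ambiently. The paper's proof is structured precisely to avoid this trap: it first extends the underlying homotopy of \emph{maps} for a short time $[0,\tilde\sigma]$ using microflexibility of the relation of immersions (an open relation, with no contact condition imposed), then chooses a family of hyperplane fields $\xi_\tau\supset d\tilde f^\tau(TOp(A))$ with $\xi_0=\xi$, observes by openness of the contact condition that $\xi_\tau=\ker\eta_\tau$ is contact for $\tau\in[0,\sigma]$, and only then invokes Lemma \ref{CS} to produce contactomorphisms $\phi_\tau$ carrying $\xi_\tau$ to $\xi$, so that $\phi_\tau\circ\tilde f^\tau$ is Legendrian. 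No ambient realization of the prescribed homotopy is ever required, which is what makes the argument work for immersed, non-embedded Legendrians; your proposal needs this (or some other) mechanism to replace the ambient-flow step.
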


\begin{proof}
 To avoid notational complexity, we shall only prove the non-parametric versions.\\
 
 {\bf Local Integrability:} Take $F:T_xM\rightarrow \xi_y\subset T_yN$ be an injective linear map. So now take an immersion $f:Op(x)\rightarrow Op(y)$ such that $df_x=F$. Take a hyper-plane field $\xi'$ on $Op(y)$ such that $df(TOp(x))\subset \xi'$, $\xi' \pitchfork \xi^{\perp}$ and $\xi'_y=\xi_y$. As $\xi'_y=\xi_y$, $\xi'$ is also a contact structure on $Op(y)$. Let $\xi=ker(\eta),\ and\ \xi'=ker(\eta')$, where $\eta\ and\ \eta'$ are one forms on $Op(y)$. We can also assume $\eta_y=\eta'_y$. Let $\eta_t=(1-t)\eta+t\eta',\ t\in I$. So $(\eta_t)_y=\eta_y$. So $\xi_t=ker(\eta_t)$ is a homotopy of contact structures on $Op(y)$. So by \ref{CS}, we get an isotopy $\phi_t:Op(y)\rightarrow Op(y)$, such that $(\phi_t)^*\xi_0=\xi_t$. As $\xi_0=\xi$ and $\xi_1=\xi'$, we get $(\phi_1)^*\xi=\xi'$. So the required Legendrian immersion is $\phi_1 \circ f$.\\  

 {\bf Microflexibility:} Let $U,\ (A,B),\ F^{\tau}$ be as in \ref{MF}. As the relation of immersions is microflexible, we get a $\tilde{\sigma}>0$ such that there exists a homotopy \[\tilde{f}^{\tau}:Op(A)\rightarrow N,\ \tau \in [0,\tilde{\sigma}]\] which extends the homotopy \[f^{\tau}:Op(B)\rightarrow N,\ where\ F^{\tau}=df^{\tau}\] Now take a homotopy $\xi_{\tau}$ of hyperplane fields, such that $d\tilde{f}^{\tau}(TOp(A))\subset \xi_{\tau}\ and\ \xi_0=\xi$. As $U$ is small, $\xi_{\tau}=ker\eta_{\tau}$ for some homotopy of one forms $\eta_{\tau}$. Define \[P:[0,\tilde{\sigma}]\rightarrow \Omega^{2n+1}(N),\ as\ P(\tau)=\eta_{\tau}\wedge (d\eta_{\tau})^n\] So $P(0)$ is a non-zero section in $\Omega^{2n+1}(N)$. So there exists a $\sigma \in (0,\tilde{\sigma}]$ such that $\eta_{\tau}$ are contact forms for $\tau \in [0,\sigma]$. Again by \ref{CS} we get an isotopy $\phi_{\tau}$ such that $(\phi_{\tau})^*ker(\eta_0)=ket(\eta_{\tau})$. So set \[f^{\tau}:Op(A)\rightarrow N\] as $\phi_{\tau}\circ \tilde{f}^{\tau},
\ for\ \tau \in [0,\sigma]$.
 \end{proof}

\begin{theorem}(\cite{MR1909245})
 \label{MHAT}
 Let $\mathcal{R}\subset X^{(r)}$ be a locally integrable, microflexible differential relation. $K\subset V$ be a polyhedron of positive codimension and $F_z:OpK\rightarrow \mathcal{R}$ be a family of sections parametrized by a cube $I^m,\ m=0,1,2,...$. Suppose the sections $F_z$ are holonomic for $z\in Op(\partial I^m)$. Then for arbitrarily small $\delta, \varepsilon >0$ there exists a family of $\delta$-small diffeotopies $h^{\tau}_z:V\rightarrow V,\ \tau\in I,\ z\in I^m$ and a family of holonomic sections $\tilde{F}_z: Op(h^1_z(K))\rightarrow \mathcal{R},\ z\in I^m$ such that \\
 
 \begin{itemize}
  \item $h^{\tau}_z=id_V$ and $\tilde{F}_z=F_z$ for all $z\in Op(\partial I^m)$\\
  \item $dist(\tilde{F}_z(v),(F_z)_{\mid Op(h^1_z(K))}(v))< \varepsilon$ for all $v\in Op(h^1_z(K))$\\
 \end{itemize}
 
 Moreover if $\mathcal{R}$ is $\mathcal{U}$-invariant, where $\mathcal{U}\subset Diff(V)$ is a "capacious" subgroup, then $h^{\tau}_z$ can be taken from $\mathcal{U}$, i.e, we can assume $h^{\tau}_z\in \mathcal{U}$.
 \end{theorem}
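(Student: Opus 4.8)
The relation $\mathcal{R}$ is in general not open (the Legendrian and isosymplectic relations, which motivate this paper, are closed conditions), so one cannot simply feed the underlying formal section $F_z$ into the ordinary Holonomic Approximation Theorem for the full jet space $X^{(r)}$ and hope the resulting holonomic section lands in $\mathcal{R}$: being $\varepsilon$-close to $F_z\in\mathcal{R}$ does not force membership in $\mathcal{R}$. The plan is therefore to re-run the inductive proof of the ordinary Holonomic Approximation Theorem, but to keep every intermediate holonomic section inside $\mathcal{R}$, using Local Integrability to manufacture local holonomic $\mathcal{R}$-sections and Microflexibility to transport and extend them in a controlled way.

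First I would fix a fine triangulation of $K$ and induct on the dimension $j$ of its skeleta $K^{(j)}$, $0\le j\le\dim K$. Because $K$ has positive codimension, every simplex of this triangulation has codimension $\ge 1$ in $V$, which provides the transverse room needed for the stretching diffeotopy. At stage $j$ I assume a $\delta$-small diffeotopy and a holonomic section $\tilde F_z\in\mathcal{R}$ over $Op(h^1_z(K^{(j-1)}))$, $\varepsilon$-close to $F_z$ and equal to $F_z$ for $z\in Op(\partial I^m)$, have been produced; the goal is to extend over each $j$-cell relative to its boundary. The parameter $z\in I^m$ is carried along throughout, and the normalization over $Op(\partial I^m)$ is preserved because every construction below is relative and natural in $z$.

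The heart is the inductive extension over a single $j$-cell, which I would model on the standard pair $\theta_j=(K^n,K^j\cup\partial K^n)$ of Definition \ref{MF}: the set $B=K^j\cup\partial K^n$ plays the role of the already-treated skeleton together with the boundary of an ambient chart, and $A=K^n$ is the neighborhood over which I must extend. Local Integrability (Definition \ref{LI}) first supplies, from the pointwise formal data of $F_z$, a holonomic section into $\mathcal{R}$ over $Op(B)$ together with a holonomic homotopy joining it to the inherited boundary section on the lower skeleton. Microflexibility (Definition \ref{MF}) then extends this holonomic homotopy from $Op(B)$ to $Op(A)$ for a short time $\tau\in[0,\sigma]$, keeping it inside $\mathcal{R}$ and constant near $\partial A$. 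Finally, the transverse stretching diffeotopy of the ordinary theorem is applied so that this short-time, $\mathcal{R}$-valued holonomic motion over $Op(A)$ is reparametrized into one carrying the section across the whole cell and $\varepsilon$-approximating $F_z$ there; positive codimension furnishes the transverse direction to stretch in, and keeping the diffeotopy $\delta$-small and supported away from $\partial A$ preserves the inductive hypotheses.

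For the $\mathcal{U}$-invariant addendum, the two capaciousness conditions of Definition \ref{CAP} are exactly what is needed to realize the stretching diffeotopies inside $\mathcal{U}$: the transversality condition supplies, at each point, a field in the Lie algebra $\mathcal{A}$ transverse to any prescribed tangent hyperplane, so the stretching direction transverse to the cell is generated by flows of $\mathcal{A}$, while the localization condition cuts these fields down to the required small supports, all parametrically in $z$. The main obstacle, demanding the most care, is reconciling the merely short-time ($\tau\in[0,\sigma]$) extensions granted by microflexibility with the globally needed holonomic section over an entire cell: this is resolved by subdividing finely and time-reparametrizing, so that each elementary extension stays within the microflexibility window $[0,\sigma]$ while their composition covers the cell. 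Tracking the $\varepsilon$-error additively across the finitely many pieces and ensuring the accumulated diffeotopy stays $\delta$-small is the remaining bookkeeping.
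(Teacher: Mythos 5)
You should first be aware that the paper itself contains no proof of Theorem \ref{MHAT}: it is quoted from \cite{MR1909245} and used as a black box, so the only meaningful comparison is with the proof in Eliashberg--Mishachev. At the level of architecture your proposal does reproduce that proof's strategy correctly: one re-runs the inductive, skeleton-by-skeleton scheme of the Holonomic Approximation Theorem while keeping every intermediate section inside $\mathcal{R}$, with local integrability supplying the local holonomic seeds, microflexibility supplying extensions over $\theta_j$-pairs $(A,B)$, and capaciousness of $\mathcal{U}$ supplying the diffeotopies inside $\mathcal{U}$ in the invariant case.

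However, your handling of the central difficulty --- reconciling the short-time extensions $\tau\in[0,\sigma]$ of microflexibility with the macroscopic extension needed over a cell --- has a genuine gap. Subdividing finely and time-reparametrizing does not work: the $\sigma>0$ produced by microflexibility depends on the homotopy and the data, so after each elementary extension you must re-invoke microflexibility with new initial data, obtaining a new $\sigma$ with no uniform positive lower bound, while each application also shrinks the neighborhood $Op(A)$ on which the extension lives; the iteration can therefore stall in Zeno fashion, and reparametrizing the time variable does not help because $\sigma$ is not invariant under reparametrization. The actual resolution (Gromov's, and the one implicit in \cite{MR1909245}) is the sharply moving diffeotopy trick: since $K$ has positive codimension, a $\delta$-small diffeotopy can move $Op(K)$ entirely off itself in arbitrarily short time, and conjugating the holonomic homotopy by such a diffeotopy trades time for space, so that an extension on $[0,\sigma]$ already achieves the full macroscopic effect. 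This is precisely why the conclusion of the theorem features the diffeotopies $h^{\tau}_z$ at all, and why capaciousness in Definition \ref{CAP} is formulated via vector fields transversal to arbitrary hyperplanes together with localization of supports: these are exactly the ingredients needed to generate sharp moves inside $\mathcal{U}$, not merely to realize the wiggling of the ordinary theorem. A secondary over-claim: local integrability (Definition \ref{LI}) furnishes holonomic germs at point-families $h(p)$, not holonomic sections over $Op(B)$ of an entire cell together with connecting homotopies; producing those is part of the induction, not an input to it.
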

 
 \begin{proposition}
  \label{RAT}
    Let $K\subset V$ be a polyhedron of positive codimension and $F_t:TV\rightarrow TW$ be a homotopy of vector bundle monomorphisms covering an isosymplectic immersion $f$ with $F_0=df,\ (f-isosymplectic)$ such that $F_t^*{\omega_W}=\omega_V$. Then for arbitrarily small $\delta,\varepsilon >0$, there exists a $\delta$-small symplectic diffeotopy $h^{\tau}:V\rightarrow V$ and a homotopy of isosymplectic immersions $\tilde{f}_t:Op(\tilde{K})\rightarrow W$, where $\tilde{K}=h^1(K)$ and $\tilde{f}_0=f_{\mid Op(\tilde{K})}$ such that the homotopy $Gd\tilde{f}_t:Op(\tilde{K})\rightarrow Sp_2n(W)$ is $\varepsilon$-close to $(GF_t)_{\mid Op(\tilde{K})}$.
 \end{proposition}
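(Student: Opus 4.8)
The plan is to realize Proposition \ref{RAT} as a direct application of Theorem \ref{MHAT} to the differential relation $\mathcal{R}_{iso}\subset J^1(V,W)$ whose holonomic sections are exactly the isosymplectic immersions, taking as capacious group $\mathcal{U}=Symp_c(V,\omega_V)$, the compactly supported symplectomorphisms of $V$. The data $F_t$ (a homotopy of symplectic monomorphisms covering $f$, with $F_0=df$ and $F_t^*\omega_W=\omega_V$) is precisely a $1$-parameter family of sections of $\mathcal{R}_{iso}$ over $Op(K)$, holonomic at the end $t=0$. Once $\mathcal{R}_{iso}$ is shown to be locally integrable, microflexible and $\mathcal{U}$-invariant with $\mathcal{U}$ capacious, Theorem \ref{MHAT} with $m=1$ yields a $\delta$-small diffeotopy $h^{\tau}\in\mathcal{U}$ (hence symplectic) and holonomic sections $\tilde F_t=j^1\tilde f_t$ over $Op(\tilde K)$, $\tilde K=h^1(K)$, with $\tilde F_0=F_0$ (so $\tilde f_0=f_{\mid Op(\tilde K)}$) and $\tilde F_t$ $\varepsilon$-close to $F_t$; since the $1$-jet determines the Gauss lift continuously, $Gd\tilde f_t$ is then $\varepsilon$-close to $GF_t$, as claimed.

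The first and main task is to verify that $\mathcal{R}_{iso}$ is locally integrable and microflexible, which I would carry out exactly as in Lemma \ref{LIMFL}, replacing the Contact Stability Lemma \ref{CS} by Moser's stability theorem for symplectic forms. For local integrability, given a linear symplectic monomorphism $F:T_xV\rightarrow T_yW$ I take any immersion $f$ with $df_x=F$; then $f^*\omega_W$ is symplectic near $x$ and agrees with $\omega_V$ at $x$, and since $Op(x)$ is contractible the path $(1-s)\omega_V+s\,f^*\omega_W$ is a family of symplectic forms with exact derivative, so Moser's method produces a local diffeomorphism $\psi$ fixing $x$ with $d\psi_x=\mathrm{id}$ and $\psi^*(f^*\omega_W)=\omega_V$; then $f\circ\psi$ is the desired local isosymplectic immersion with derivative $F$ at $x$. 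For microflexibility I start from microflexibility of the relation of immersions to extend the given holonomic homotopy $f^{\tau}$ from $Op(B)$ to a homotopy of immersions $\tilde f^{\tau}$ on $Op(A)$ for $\tau\in[0,\tilde\sigma]$; the forms $(\tilde f^{\tau})^*\omega_W$ equal $\omega_V$ at $\tau=0$, hence stay symplectic for $\tau\in[0,\sigma]$ with $\sigma$ small, and a parametric Moser correction $\psi_{\tau}$ turns $\tilde f^{\tau}\circ\psi_{\tau}$ into the required isosymplectic homotopy over $Op(A)$.

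Next I would check that $\mathcal{U}=Symp_c(V,\omega_V)$, with Lie algebra the compactly supported symplectic vector fields, is capacious in the sense of Definition \ref{CAP} and that $\mathcal{R}_{iso}$ is $\mathcal{U}$-invariant. Invariance is immediate: if $f$ is isosymplectic and $h\in\mathcal{U}$ then $(f\circ h)^*\omega_W=h^*\omega_V=\omega_V$. For the first capaciousness condition, a field $v=X_H$ can be cut off to $X_{\chi H}$ with $\chi\equiv 1$ near $A$ and $\mathrm{supp}\,\chi\subset U$, giving a field in $\mathcal{A}$ supported in $U$ and agreeing with $v$ on $A$; for the second, nondegeneracy of $\omega_V$ makes the evaluation $X_H\mapsto X_H(x)$ surjective onto $T_xV$, so some $v\in\mathcal{A}$ is transversal to any prescribed hyperplane $\tau\subset T_xV$. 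With these in hand, Theorem \ref{MHAT} applies and gives the conclusion of the first paragraph.

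The step I expect to be the genuine obstacle is the \emph{parametric, boundary-compatible} Moser correction inside the microflexibility argument: one must choose the stabilizing isotopies $\psi_{\tau}$ to depend continuously on all parameters, to be the identity over $Op(\partial A)$ and over the locus where the homotopy is already isosymplectic (so that the corrected homotopy stays constant there and genuinely extends the given one over $Op(B)$), and to be supported so that a uniform time $\sigma>0$ survives. This is where the exactness constants in Moser's homotopy formula must be controlled uniformly in the parameters over the small ball $U$; everything else reduces to reading off $\tilde f_t$ from $\tilde F_t=j^1\tilde f_t$ and transcribing the $\varepsilon$-estimate on $1$-jets into an $\varepsilon$-estimate on Gauss lifts.
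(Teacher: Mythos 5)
Your plan stands or falls with the claim that the relation $\mathcal{R}_{iso}$ of isosymplectic immersions is microflexible, and that claim is false: the step you flag at the end as ``the genuine obstacle'' (the boundary-compatible parametric Moser correction) is not a technicality but a genuine obstruction, namely the flux of the deformation. In the microflexibility problem for a $\theta_1$-pair $(A,B)$, $B=K^1\cup\partial K^n$, the Moser vector field must vanish on $Op(B)\cup Op(\partial A)$, so one needs a primitive $\beta_\tau$ of $\frac{d}{d\tau}(\tilde f^\tau)^*\omega_W$ that vanishes there; but on that set the natural primitive $\frac{d}{d\tau}(\tilde f^\tau)^*\alpha_W$ is only \emph{closed}, and its periods obstruct any correction. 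Concretely, take $V=\mathbb{R}^2$, $\omega_V=du\wedge dv$, $A=[-1,1]^2$, $B=\bigl([-1,1]\times\{0\}\bigr)\cup\partial A$, $W=\mathbb{R}^4$ with $\alpha_W=x_1dy_1+x_2dy_2$, $f^0(u,v)=(u,v,0,0)$, and on $Op(K^1)$ set $f^\tau(u,v)=(u,v,\tau g(u),\tau h(u))$ with $g,h$ supported in the interior of $[-1,1]$ and $\int g\,h'\,du\neq 0$, extended as the constant homotopy near $\partial A$. Each $f^\tau$ is isosymplectic (the last two coordinates depend on $u$ only, so they contribute nothing to the pullback of $\omega_W$) and the homotopy is constant near $\partial B$, yet $\frac{d}{d\tau}(f^\tau)^*\alpha_W=2\tau g(u)h'(u)\,du$ has nonzero integral over the segment. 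If an isosymplectic extension $\tilde f^\tau:Op(A)\to W$, constant over $Op(\partial A)$, existed for some $\sigma>0$, then $\beta_\tau=\frac{d}{d\tau}(\tilde f^\tau)^*\alpha_W$ would be closed on the ball $Op(A)$, hence exact, hence with zero period around the loop formed by the segment and an arc of $\partial A$; but that period equals $2\tau\int g\,h'\,du\neq 0$ for $\tau\in(0,\sigma]$, a contradiction. So $\mathcal{R}_{iso}$ is not even $1$-microflexible in the sense of Definition \ref{MF}, and Theorem \ref{MHAT} cannot be applied to it.

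This flux obstruction is precisely what the paper's proof is built to circumvent, which is why its route is not optional: it extends the data to the symplectic normal bundle neighborhood $Op(V)\subset E$, replaces maps to $W$ by their graphs $X\mapsto(X,F''_t(X))$, which are \emph{isotropic} for $\tilde\omega\oplus(-\omega_W)$, lifts these to the contactization $Op(V)\times W\times\mathbb{R}$ with contact form $dz-\eta$, and applies Theorem \ref{MHAT} to the Legendrian relation $\mathcal{R}_{Leg}$ — which \emph{is} locally integrable and microflexible (Lemma \ref{LIMFL}), exactly because the extra coordinate $z$ carries the primitive of the pulled-back form, so deformations with nonzero flux simply do not lift to Legendrian deformations — using $Ham(Op(V))$ as the capacious group. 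A second, independent violation in your proposal: Theorem \ref{MHAT} with $m=1$ requires the sections to be holonomic for $t\in Op(\partial I)$, i.e.\ near \emph{both} endpoints, whereas your family $F_t$ is holonomic only at $t=0$ ($F_1$ is in general not holonomic, else there would be nothing to prove). The paper repairs this with the doubling trick $F'_t=F_{2t}$ for $t\in[0,1/2]$, $F'_t=F_{2-2t}$ for $t\in[1/2,1]$, so that both ends equal the holonomic section $df$, and at the end it only uses the first half $[0,t_0]$ of the approximating family. Your argument would need both repairs, and the first one forces essentially the paper's construction.
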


\begin{proof}
 Define $F'_t$ as \[F'_t:=F_{2t},\ t\in[0,1/2]\ and\ F'_t:=F_{2-2t},\ t\in[1/2,1]\]
 
 As $f:V\rightarrow W$ is an isosymplectic immersion, we consider the following symplectic vector bundle $E\rightarrow V$ whose fiber over a point $v\in V$ is the space $(df(T_vV))^{\perp_{\omega_W}}$, the $\omega_W$-dual to $df(T_vV)\subset T_{f(v)}W$. By (9.2.2) of \cite{MR1909245} there exists a symplectic structure $\tilde{\omega}$ on a neighborhood $Op(V)$ of the zero section $V$ of $E$ such that $\tilde{\omega}_{\mid V}=\omega_V$. As both $\omega_V\ and\ \omega_W$ are exact, $\tilde{\omega}$ can also be taken to be exact, so let $\tilde{\omega}=d\tilde{\alpha}$. Now $F'_t$ can be extended to a homotopy of vector bundle morphism $F''_t:TOp(V)\rightarrow TW$ which fiber wise isomorphism, such that $(F''_t)^*\omega_W=\tilde{\omega}$. Moreover $F''_0\ and\ F''_1$ can be be chosen to be holonomic by "Symplectic Neighborhood Theorem" (9.3.2) of \cite{MR1909245}. Now consider the symplectic manifold $Op(V)\times W$ with symplectic structure $\tilde{\omega}\oplus (-\omega_W)$. Define \[F^0_t:TOp(V)\rightarrow 
TOp(V)\times TW\] \[X \longmapsto (X,F''_t(X))\] Observe that for each fixed $t\in I,\ and\ x\in Op(V)$, $Im(F^0_t)_x$ is an isotropic subspace.\\
 
  Now take $s:I\rightarrow I$ such that $s(t)=0,\ for\ t\in [0,\sigma]$, and $s(t)=1,\ for\ t\in [1-\sigma,1],\ where\ 0<\sigma<1/2\ is\ small$ and $s_{\mid[\sigma,1-\sigma]}:[\sigma,1-\sigma]\rightarrow I$ is a homeomorphism. Let $s^{-1}(1/2)=t_0$. Define $\tilde{F}_t$ as $\tilde{F}_t:=F^0_{s(t)}$. Observe that $\tilde{F}_t$ is holonomic for $t\in Op(\partial I)$.\\

Consider the contact manifold $Op(V)\times W\times \mathbb{R}$ with contact structure $ker(dz-\eta)$, where $\eta=\tilde{\alpha}\oplus (-\alpha_W)$ and $z$ is the variable in $\mathbb{R}$. The homotopy of isotropic monomorphisms $\tilde{F}_t$ lifts to a homotopy of isotropic monomorphisms \[L(\tilde{F}_t):TOp(V)\rightarrow T(Op(V)\times W\times \mathbb{R})\] which in our case is a homotopy of Legendrian monomorphisms.\\

 Now in view of \ref{LIMFL} we can use \ref{MHAT}. So by \ref{MHAT} we get, for given arbitrarily small $\delta, \varepsilon >0$ there exists a family of $\delta$-small diffeotopies $h^{\tau}_t:Op(V)\rightarrow Op(V),\ \tau\in I,\ t\in I$ and a family of holonomic sections $\tilde{F}^{hol}_t: Op(h^1_t(K))\rightarrow \mathcal{R}_{Leg},\ t\in I$ such that \\
 
 \begin{itemize}
  \item $h^{\tau}_t=id_{Op(V)}$ and $\tilde{F}^{hol}_t=L(\tilde{F}_t)$ for all $t\in Op(\partial I)$\\
  \item $dist(\tilde{F}^{hol}_t(v),L(\tilde{F}_t)_{\mid Op(h^1_t(K))}(v))< \varepsilon$ for all $v\in Op(h^1_t(K))$\\
 \end{itemize}

 Moreover, as $\mathcal{U}=Ham(Op(V))$, the identity component of the group of compactly supported hamiltonian diffeomorphisms of the symplectic manifold $(Op(V),d\tilde{\alpha})$ is "capacious" and $\mathcal{R}_{Leg}$ here is invariant under the action of $\mathcal{U}$, we can take $h^{\tau}_t\in \mathcal{U}$.\\
 
  Consider $\tilde{F}^{hol}_t,\ for\ t\in [0,t_0]$ to get a homotopy of Legendrian immersions which we denote by $f^{hol}_t:Op(h^1_t(K))\rightarrow Op(V)\times W\times \mathbb{R},\ for\ t\in [0,t_0]$. Let $\pi:Op(V)\times W\times \mathbb{R}\rightarrow Op(V)\times W$ be the projection on the first two factors. Then $\pi \circ f^{hol}_t:Op(h^1_t(K))\rightarrow Op(V)\times W$ is a homotopy of exact Lagrangian immersions into $(Op(V)\times W,d\eta)$. \\

 Set $\tilde{K}=h^1_{t_0}(K)\subset h^1_{t_0}(V)$. As $h^1_{t_0}$ is a diffeomorphism, we identify $V$ with $h^1_{t_0}(V)$. Now take $\tilde{v}\in Op_V(\tilde{K})$. So there exists a unique $v\in Op_V(K)$ such that $\tilde{v}=h^1_{t_0}(v)$. Now consider the curve \[\gamma_{\tilde{v}}:[0,t_0]\rightarrow Op(V)\times W,\ \gamma_{\tilde{v}}(t)=\pi \circ f^{hol}_{t_0-t}(h^1_{t_0-t}(v))\] Set $\tilde{f}'_t(\tilde{v})=\gamma_{\tilde{v}}(t_0-t),\ for\ t\in[0,t_0]$. So $\tilde{f}'_t(\tilde{v})=(\tilde{v},\tilde{f}_t(\tilde{v}))$. As the restriction of a isotropic immersion is isotropic, so $(\tilde{f}_t)_{\mid V}$ is isosymplectic. Now the required homotopy of isosymplectic immersions is given by reparametrizing $(\tilde{f}_t)_{\mid V}$.
\end{proof}

\begin{proof}(Proof of \ref{MAL})
 For $\bar{\varepsilon}\in (0,\pi/4)$ choose an integer $N$ such that for each interval \[\Delta_i=[(i-1)/N,i/N]\] the homotopy $\{G_t\}_{t\in \Delta_i}$ is $\bar{\varepsilon}$-small. Set $K_0=K\ and\ V_0=Op(K_0)$.\\
 
 {\bf Step-1:} $\{G_t\}_{t\in \Delta_1}$ defines a homotopy of sections \[F^1_t:V_0\rightarrow \mathcal{R}_{iso-symp}\subset J^1(V_0,W)\] such that base of $F_t$ is $f$ for all $t\in [0,1/N]$. By \ref{RAT} above one can $\varepsilon_1$-approximate $F^1_t$ by $J^1\tilde{f}_t$ over $Op(h^1_1(K_0))$, where $h^1_1$ is a $\delta/N$-small symplectomorphism and $\tilde{f}_t:V_0\rightarrow W\ for\ t\in[0,1/N]$ are isosymplectic immersions. Set $K_1=h^1_1(K_0),\ V_1=Op(K_1)$.\\
 
 {\bf Step-2:} As $\varepsilon_1$ was chosen small, we can approximate $\{G_t\}_{t\in \Delta_2}$ by $\{G^2_t\}_{t\in \Delta_2}$ such that $\{G^2_t\}_{t\in \Delta_2}$ covers $\tilde{f}_{1/N}$ and $G^2_{1/N}=G(d\tilde{f}_{1/N})$. So $\{G^2_t\}_{t\in \Delta_2}$ defines a homotopy of sections \[F^2_t:V_1\rightarrow \mathcal{R}_{iso-symp}\subset J^1(V_1,W)\] such that base of $F^2_t$ is $\tilde{f}_{1/N}$. Again by \ref{RAT} one can $\varepsilon_2$-approximate $F^2_t$ by $J^1\tilde{f}_t$ over $Op(h^1_2(K_1))$, where $h^1_2$ is a $\delta/N$-small symplectomorphism and $\tilde{f}_t:V_1\rightarrow W\ for\ t\in[1/N,2/N]$ are isosymplectic immersions. Set $K_2=h^1_2(K_1),\ V_2=Op(K_2)$. Continue this way till $i=N$. Set $\tilde{K}=K_N$. Now we can define the required $f_t$ in the following way. For $t\in \Delta_i$ set \[f_t:Op(K_N)\rightarrow W\] as follows\\
 
 Take $v\in Op(K_N)$. So there is an unique $v'\in Op(K_i)$ such that \[h^1_N\circ...\circ h^1_{i+1}(v')=v\] So set $f_t(v)=\tilde{f}_t(v')$.
\end{proof}

We now state a theorem which will help us complete the proof of \ref{SAI}.\\

\begin{theorem}(\cite{MR2519215})
\label{DI}
 Let $(V,\omega_V)$ be an open symplectic manifold and let $K\subset V$ be a core of it, then there exists a homotopy of isosymplectic immersions $g_t:V\rightarrow V$ such that \[g_0=id_V,\ and\ g_1(V)\subset Op(K)\]
\end{theorem}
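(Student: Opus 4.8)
The plan is to derive Theorem~\ref{DI} from the parametric $h$-principle for codimension-zero isosymplectic immersions of an \emph{open} manifold into itself: first produce a formal compression of $V$ into $Op(K)$, then make it holonomic by the machinery already set up above. Throughout I work with the relation $\mathcal{R}_{iso}\subset J^1(V,V)$ whose holonomic solutions are the isosymplectic immersions $g:V\to V$; since source and target both have dimension $2n$, every such $g$ is a local symplectomorphism and each $dg_x$ is a linear symplectic isomorphism.

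First I would build the formal solution. As $V$ is open it retracts onto a core $K$ of positive codimension, so there is a smooth homotopy $\phi_t:V\to V$ with $\phi_0=\mathrm{id}_V$ and $\phi_1(V)\subset Op(K)$, obtained by pushing $V$ inward for a long finite time along a vector field adapted to an exhausting function with skeleton $K$. I then cover $\phi_t$ by a homotopy of fibrewise symplectic isomorphisms $F_t:TV\to TV$ with $F_0=\mathrm{id}_{TV}$. Such a lift exists because the fibre bundle over $V\times I$ whose fibre at $(x,t)$ is the space of symplectic isomorphisms $T_xV\to T_{\phi_t(x)}V$ has connected structure group $Sp(2n,\R)$, and the section $F_0$ over $V\times\{0\}$ extends over $V\times I$ since the pair $(V\times I,\,V\times\{0\})$ carries no obstruction. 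The pair $(\phi_t,F_t)$ is then a formal homotopy in $\mathcal{R}_{iso}$ from $J^1\mathrm{id}_V$ to a formal isosymplectic immersion whose base map has image in $Op(K)$.

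Next I would make this formal homotopy holonomic. The relation $\mathcal{R}_{iso}$ is microflexible and locally integrable: microflexibility is local, so in Darboux charts, where the symplectic forms are exact, one argues exactly as in Lemma~\ref{LIMFL} and Proposition~\ref{RAT} by passing to the isotropic, and then Legendrian, lift in the product $\bigl(V\times V,\ \tilde\omega\oplus(-\omega_V)\bigr)$ and invoking microflexibility of immersions together with Contact Stability (Lemma~\ref{CS}). Moreover $\mathcal{R}_{iso}$ is invariant under the Hamiltonian group $Ham(V)$, which is capacious in the sense of Definition~\ref{CAP}, since precomposing an isosymplectic immersion with a symplectomorphism of the source again gives an isosymplectic immersion. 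Applying Holonomic Approximation (Theorem~\ref{MHAT}) parametrically in $t$, keeping the already-holonomic end $t=0$ fixed, and then globalizing over the open manifold using the compression $\phi_t$, turns $(\phi_t,F_t)$ into a genuine homotopy of isosymplectic immersions $g_t:V\to V$ with $g_0=\mathrm{id}_V$. Since the holonomic approximation is $C^0$-close to $\phi_t$, the image $g_1(V)$ lies in a neighborhood of $\phi_1(V)$, which I arrange to sit inside $Op(K)$ by shrinking the target neighborhood at the start.

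The hard part will be the globalization in the last step. Unlike the ordinary topological compression of an open manifold, every stage of the homotopy must remain isosymplectic, so one cannot simply post-compose a solution defined near $K$ with a non-symplectic retraction of $V$ onto $Op(K)$; the compressing maps are forced to be non-injective, folding isosymplectic immersions of the whole end $V\setminus Op(K)$. Spreading the near-core holonomic solution across this end through isosymplectic immersions, doing so parametrically while pinning the $t=0$ slice to $\mathrm{id}_V$, and guaranteeing that the final image genuinely lands in $Op(K)$, is exactly where openness of $V$, microflexibility of $\mathcal{R}_{iso}$, and capaciousness of $Ham(V)$ must be combined, and is the crux of the argument.
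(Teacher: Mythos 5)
First, a point of reference: the paper itself does not prove Theorem~\ref{DI} at all --- it is imported as a black box from \cite{MR2519215} and used in the proof of Theorem~\ref{SAI}. So your proposal can only be judged on its own completeness, and there it has a genuine gap, one you yourself flag in your final paragraph. Holonomic approximation (Theorem~\ref{MHAT}) produces holonomic sections only on $Op(h^1(K))$, a neighborhood of a slightly perturbed polyhedron of positive codimension; it never produces solutions on all of $V$. The standard device for passing from ``solutions near a core'' to ``solutions on the whole open manifold'' is to compress $V$ into $Op(K)$ by a diffeotopy and transport solutions back by invariance of the relation; but, as you correctly observe, $\mathcal{R}_{iso}$ is invariant only under symplectomorphisms of the source, and no compression of $V$ into $Op(K)$ through symplectomorphisms exists (they preserve the volume form $\omega_V^n$). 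Your proposed resolution --- ``spreading the near-core holonomic solution across the end through isosymplectic immersions'' --- is precisely the assertion of Theorem~\ref{DI}: a homotopy of isosymplectic immersions carrying all of $V$ into $Op(K)$. The proposal therefore reduces the theorem to itself; the step you defer as ``the crux'' is not a technical globalization but the entire content of the result. What actually fills this hole is Gromov's full h-principle for locally integrable, microflexible relations invariant under a capacious subgroup of $Diff(V)$ on \emph{open} manifolds (\cite{MR1909245}, the theorem behind \cite{MR2519215} as well), whose proof is an induction over a handle decomposition in which the $Ham$-invariance is exploited at every extension step --- substantially more than \ref{MHAT} plus the observations you make.

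Two smaller defects. (i) To apply \ref{MHAT} parametrically in $t$ you need the sections to be holonomic for $t\in Op(\partial I)$, i.e.\ near \emph{both} endpoints; your family $(\phi_t,F_t)$ is holonomic only at $t=0$. The paper's own Proposition~\ref{RAT} circumvents exactly this with the doubling reparametrization $F'_t:=F_{2t}$, $F_{2-2t}$, and your sketch would need the same trick. (ii) The claim that the final solution $g_1$ is $C^0$-close to $\phi_1$, hence has image in $Op(K)$, is unsupported: the $\varepsilon$-closeness in \ref{MHAT} holds only over $Op(h^1(K))$, and whatever mechanism extends the solution over the rest of $V$ must be shown separately to keep the image of all of $V$ (not just of the core neighborhood) inside $Op(K)$. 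This containment is a global $C^0$ statement that no part of your argument controls.
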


\begin{proof}(Proof of \ref{SAI})
 Let $K\subset V$ be a core. Use \ref{MAL} to approximate $G_t$ near $\tilde{K}=h^1(K)$ by a homotopy of isosymplectic immersions $\tilde{f}_t:Op_V(\tilde{K})\rightarrow W$. As $A\subset Sp_{2n}(W)$ is open, a sufficiently close approximation will give us $Gd\tilde{f}_1(Op_V(\tilde{K}))\subset A$. Now by \ref{DI} there exists a homotopy of isosymplectic immersions $g_t$ with the above properties with respect to the core $\tilde{K}$. So the required homotopy is $f_t=\tilde{f}_t\circ g_1$.
\end{proof}

\section{Generalization To Closed Manifolds} In this section we generalize the symplectic $A$-directed immersion theorem to closed manifolds. We introduce the following notions.\\
 
 \begin{definition}
  \label{CC}
  Let $n<m\leq q$. An open set $A\subset Sp_{2n}(W)$ is called $m$-complete if there exists an open set $\hat{A}\subset Sp_{2m}(W)$ such that \[A=\cup_{\hat{L}\in \hat{A}}Sp_{2n}(\hat{L})\]
 \end{definition}
 
\begin{lemma}
 \label{FP}
 Let $Sp_{m,n}(W)$ be the manifold of all $(2m,2n)$-symplectic flags on $W$, i.e, \[Sp_{m,n}(W)=\{(\hat{L},L):\hat{L}\in Sp_{2m}(W)\ and\ L\in Sp_{2n}(\hat{L})\}\] Consider the natural projection $Sp_{m,n}(W)\stackrel{P}{\rightarrow}Sp_{2n}(W)$ and let the following data be given 
   \[
  \xymatrix@=2pc@R=2pc{
 X\times \{0\}\ar@{->}[r]^-{f}\ar@{->}[d] & Sp_{m,n}(W)\ar@{->}[d]^-{P}\\
 X\times I \ar@{->}[r]_-{F} & Sp_{2n}(W)
 }
 \]
 Such that $\pi \circ F=\pi \circ P \circ f$ then $F$ lifts to a map $\tilde{F}:X\times I \rightarrow Sp_{m,n}(W)$, where $\pi:Sp_{2n}(W)\rightarrow W$ be the bundle projection.
\end{lemma}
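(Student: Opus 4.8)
The plan is to realise $P:Sp_{m,n}(W)\to Sp_{2n}(W)$ as a locally trivial fibre bundle and then to obtain $\tilde F$ from the homotopy lifting property. First I would describe the fibre of $P$ by symplectic linear algebra. Over $Sp_{2n}(W)$ form the pullback $\pi^{*}TW$ together with the tautological symplectic subbundle $\mathcal{L}$ (fibre $L\subset T_{\pi(L)}W$ over the point $L$) and its fibrewise $\omega_W$-orthogonal complement $\mathcal{L}^{\perp}$ (fibre $L^{\perp_{\omega_W}}$). Because $L$ is a symplectic plane, $\omega_W$ is nondegenerate on $\mathcal{L}$, so $\mathcal{L}^{\perp}$ is a smooth symplectic vector subbundle of $\pi^{*}TW$ of rank $2q-2n$.

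Next I would use the identification sending a flag $(\hat L,L)$ to $\hat L\cap L^{\perp_{\omega_W}}\in Sp_{2(m-n)}(L^{\perp_{\omega_W}})$, with inverse $L'\mapsto (L\oplus L',L)$; this is a diffeomorphism of $Sp_{m,n}(W)$ onto the symplectic Grassmannian bundle $Sp_{2(m-n)}(\mathcal{L}^{\perp})$ under which $P$ becomes the bundle projection. Hence $P$ is locally trivial with fibre the symplectic Grassmannian $Sp_{2(m-n)}(\R^{2q-2n})$, which is nonempty since $n<m\le q$. As a fibre bundle over the (paracompact) manifold $Sp_{2n}(W)$, the map $P$ is a Hurewicz fibration, so it enjoys the homotopy lifting property for every $X$ (a Serre fibration already suffices when $X$ is a CW-complex).

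With $P$ a fibration the conclusion is immediate. The commutativity of the square means $f$ is a lift of $F|_{X\times\{0\}}$, and the hypothesis $\pi\circ F=\pi\circ P\circ f$ says exactly that $\pi\circ F$ does not depend on $t$, i.e. for each $x$ the path $t\mapsto F(x,t)$ stays in the single fibre $\pi^{-1}(w)$, $w=\pi(P(f(x)))$. Applying the homotopy lifting property to $f$ and $F$ produces $\tilde F:X\times I\to Sp_{m,n}(W)$ with $P\circ\tilde F=F$ and $\tilde F|_{X\times\{0\}}=f$; since $\pi\circ P\circ\tilde F=\pi\circ F$ remains $t$-independent, $\tilde F$ covers the prescribed fixed map to $W$. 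Concretely the lift can be written fibrewise: choosing for each $x$ a path $\Phi_t\in Sp(T_wW)$ with $\Phi_0=\mathrm{id}$ and $\Phi_t(F(x,0))=F(x,t)$ (possible because $Sp(T_wW)$ acts transitively on symplectic $2n$-planes, with $\Phi\mapsto\Phi(F(x,0))$ again a fibration), one sets $\hat L(x,t)=\Phi_t(\hat L(x,0))$; as $\Phi_t$ is symplectic this is a symplectic $2m$-plane containing $F(x,t)$, and HLP makes the choice continuous in $x$.

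The argument is soft, so there is no hard computation; the one genuinely technical point is the first step, namely checking that $\mathcal{L}^{\perp}$ is a smooth symplectic subbundle and that $Sp_{2(m-n)}(\mathcal{L}^{\perp})$ is locally trivial, which is what upgrades $P$ from a fibrewise-nice surjection to an honest fibration. Once that is in place everything is the standard homotopy lifting property, the sole role of the hypothesis $\pi\circ F=\pi\circ P\circ f$ being to keep the lift over the prescribed, time-independent base in $W$ — precisely the tangential condition needed in the application to $m$-complete $A$-directed immersions.
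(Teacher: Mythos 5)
Your proposal is correct, but it proves the lemma by a genuinely different, softer route than the paper. The paper's proof is a direct construction carried out inside a fixed tangent space $T_yW$: it subdivides $I$ into intervals $\Delta_i=[i/N,(i+1)/N]$ on which $F(x,\cdot)$ moves only $\delta$-little, chooses at each gridpoint a symplectic $2(m-n)$-dimensional subspace $G(x,i/N)$ of the symplectic complement of $F(x,i/N)$ (starting from the one determined by $f$ at $t=0$), linearly interpolates these choices over each $\Delta_i$, and observes that $\delta$-smallness keeps the interpolated family $G(x,t)$ symplectic, so that $\tilde F(x,t)=(F(x,t)\oplus G(x,t),F(x,t))$ is the desired lift. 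Note that this construction genuinely uses the hypothesis $\pi\circ F=\pi\circ P\circ f$: the interpolation $(1-s(t))v+s(t)v'$ only makes sense because all the planes $F(x,t)$, $t\in I$, lie in one and the same tangent space. Your argument instead identifies $P$ with the Grassmannian bundle $Sp_{2(m-n)}(\mathcal{L}^{\perp})\to Sp_{2n}(W)$ via $(\hat L,L)\mapsto\hat L\cap L^{\perp_{\omega_W}}$ and invokes the homotopy lifting property of a fibre bundle over a paracompact base. What this buys: the lift exists for an \emph{arbitrary} homotopy $F$ (the verticality hypothesis is, as you correctly observe, only there so that the lift covers the fixed map to $W$, which then comes for free from $P\circ\tilde F=F$), and the continuity-in-$x$ of all choices --- a point the paper's proof leaves implicit when it ``chooses'' the complements $G(x,i/N)$ --- is absorbed into the HLP. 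What the paper's approach buys: it is elementary and self-contained (linear symplectic algebra plus compactness of $I$, with no appeal to local triviality of $P$ or to Hurewicz's theorem), and it is closer in spirit to the quantitative subdivision arguments used elsewhere in the paper, e.g.\ in the proof of the Main Approximation Lemma. One caveat on your write-up: the closing ``concrete'' description via paths $\Phi_t\in Sp(T_wW)$ is sketchier than the rest, since the group $Sp(T_wW)$ varies with $x$ and making $\Phi_t$ continuous in $x$ again requires the bundle-level HLP argument; but this remark is inessential, as your main argument is already complete without it.
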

 
 \begin{proof}
  Let $F(x,0)\subset T_yW$ for some $y\in W$. So $F(x,0)$ is a symplectic subspace of $T_yW$. Denote its symplectic complement by $G_x$. Now if $F$ is such that $F(x,t)\cap G_x= \{0\},\ for\ all\ t\ and\ all\ x$. Then defining $\tilde{F}(x,t)=(F(x,t)\oplus K_x,F(x,t))$ for some $2(m-n)$-dimensional symplectic subspace $K_x$ of $G_x$ provides a lift.\\
 
 Now let us consider the general case. Subdivide the parameter interval $I$ into subintervals $\Delta_i=[i/N,(i+1)/N]$ so that on each $\Delta_i$, $F(x,t)$ is $\delta$-small, where $\delta$ is a small positive real number. Let us choose $2(m-n)$-dimensional symplectic subspaces of the symplectic complements of $F(x,i/N)$ and denote it by $G(x,i/N)$ with $f(x,0)=(F(x,0)\oplus G(x,0),F(x,0))$. Let $s$ be a homeomorphism from $\Delta_i$ to $I$ such that $s(i/N)=0$ and $s((i+1)/N)=1$. Define for $t\in \Delta_i$ \[G(x,t)=\{(1-s(t))v+s(t)v',\ where\ v\in G(x,i/N)\ and\ v'\in G(x,(i+1)/N)\}\] Observe that as $F(x,t)$ is $\delta$-small for $t\in \Delta_i$ so is $G(x,t)$ for $t\in \Delta_i$. So for $t\in \Delta_i$, $G(x,t)$ consists of symplectic subspaces only. Now $\tilde{F}(x,t)=(F(x,t)\oplus G(x,t),F(x,t))$ provides the lift.
  \end{proof}

 \begin{theorem}
  \label{GCM}
  Let $A\subset Sp_{2n}(W)$ be an open set which is $m$-complete. Then the statement of \ref{SAI} holds for closed $V$.
 \end{theorem}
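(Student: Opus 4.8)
The plan is to reduce the closed case to the open-manifold theorem \ref{SAI} by the classical completeness-thickening. Since $A$ is $m$-complete, fix the open set $\hat{A}\subset Sp_{2m}(W)$ with $A=\cup_{\hat{L}\in\hat{A}}Sp_{2n}(\hat{L})$ from \ref{CC}. First I would replace $V$ by an open $2m$-dimensional symplectic manifold $\hat{V}$ carrying $V$ as a symplectic submanifold of codimension $2(m-n)$: concretely one takes a neighbourhood of the zero section of a rank-$2(m-n)$ symplectic vector bundle over $V$, e.g. $\hat{V}=V\times\mathbb{R}^{2(m-n)}$ with $\omega_{\hat V}=\omega_V\oplus\omega_{std}$, so that $\hat{V}$ is open of dimension $2m$, $\omega_{\hat V}|_V=\omega_V$, and $V$ is a positive-codimension core $K$ of $\hat V$. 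Using the symplectic neighbourhood theorem I would extend $f_0$ to an isosymplectic immersion $\hat{f}_0:\hat{V}\to W$ with $\hat f_0|_V=f_0$.

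The heart of the matter is to produce the formal data on $\hat V$. I would first lift the terminal map: since $G_1(V)\subset A=\cup_{\hat L\in\hat A}Sp_{2n}(\hat L)$, each $G_1(v)$ lies in some $\hat L\in\hat A$, and choosing such $2m$-planes gives a lift $\hat G_1:V\to\hat A$ of $G_1$ through the flag projection $P:Sp_{m,n}(W)\to Sp_{2n}(W)$. Applying the flag homotopy-lifting lemma \ref{FP} to $G_t$ run backwards, with initial datum the flag $(\hat G_1,G_1)$ at $t=1$, then yields a lifted flag homotopy $\tilde G_t=(\hat G_t,G_t):V\to Sp_{m,n}(W)$ with $\pi\circ\hat G_t=f_0$, $P\circ\tilde G_t=G_t$, $\hat G_1(V)\subset\hat A$, and $\hat G_0$ a symplectic $2m$-plane field containing $G_0=Gdf_0$. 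Adjusting the extension $\hat f_0$ so that its normal $\mathbb{R}^{2(m-n)}$-directions map onto the complement of $df_0(T_vV)$ inside $\hat G_0(v)$ (again by the symplectic neighbourhood theorem) realises $\hat G_0=Gd\hat f_0$ on $V$; shrinking $\hat V$ and using that $\hat A$ is open, $\hat G_t$ extends to a symplectic tangential homotopy over $\hat V$ with $\hat G_1(\hat V)\subset\hat A$ and $\hat G_0=Gd\hat f_0$.

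Now $\hat V$ is open, $\hat f_0$ is isosymplectic, and $\hat G_t$ runs from $Gd\hat f_0$ into the open set $\hat A$, so \ref{SAI} (i.e. \ref{MAL} near the core $V$ followed by \ref{DI}) produces a homotopy of isosymplectic immersions $\hat f_t:\hat V\to W$ with $\hat f_1$ being $\hat A$-directed and $C^0$-close to $\hat f_0$ near $V$. Restricting, $f_t:=\hat f_t|_V$ is a homotopy of isosymplectic immersions of $V$ starting at $f_0$, and for each $v$ one has $Gdf_1(v)=df_1(T_vV)\subset d\hat f_1(T_v\hat V)=Gd\hat f_1(v)\in\hat A$, whence $Gdf_1(v)\in Sp_{2n}(Gd\hat f_1(v))\subset A$ by $m$-completeness; thus $f_1$ is $A$-directed, as required.

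The step I expect to be the main obstacle is constructing the lift $\hat G_t$ that terminates in $\hat A$. Lemma \ref{FP} only propagates the complementary symplectic $2(m-n)$-planes along the homotopy and gives no control over the open set $\hat A$; the landing in $\hat A$ must be imported through $m$-completeness by prescribing $\hat G_1$, and the delicate point is that the pointwise choices of $\hat L\supset G_1(v)$ in $\hat A$ be made continuously over the closed manifold $V$ — the fibres of $P$ are connected, being symplectic Grassmannians of a connected group, but $\hat A$ meets them in open sets that need not be, so a global section is not automatic. A secondary technical issue is that a closed symplectic $V$ forces $\omega_{\hat V}$ to be non-exact, so \ref{SAI} cannot be quoted verbatim; one should instead feed the data into the underlying local engine \ref{RAT}/\ref{MAL}, which only requires exactness on a neighbourhood of the positive-codimension core $V$ and is therefore unaffected.
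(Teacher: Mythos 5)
Your reduction is essentially the paper's own first step: thicken $V$ to an open $2m$-dimensional symplectic manifold, extend $f_0$ to an isosymplectic immersion of the thickening whose tangent planes along $V$ realize $\hat G_0$ (symplectic neighborhood/stability theorem), lift the tangential homotopy through the flag projection via \ref{FP} starting from a prescribed terminal lift into $\hat A$, apply \ref{SAI} over the thickening, restrict to $V$, and conclude $A$-directedness of $f_1=\hat f_1|_V$ from $m$-completeness. One small correction: the thickening should be built from the normal bundle of $G_1(v)$ inside $\hat G_1(v)$, as the paper does, not ``e.g.'' the trivial bundle $V\times\mathbb{R}^{2(m-n)}$; that complementary bundle need not be trivial, and your later adjustment of $\hat f_0$ implicitly uses an identification of the normal directions of the thickening with exactly this bundle.

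The genuine gap is the point you yourself flag in your last paragraph and then leave unresolved: the existence of a continuous global lift $\hat G_1:V\to\hat A$ (equivalently $\bar G_1:V\to\bar A$) of $G_1$. Your diagnosis of why this can fail is correct --- $\bar A$ meets each fiber of $P$ in a nonempty open set, but these sets need not be connected and there is no reason for a global section over a closed $V$ to exist --- but diagnosing the obstacle is not the same as overcoming it, and this is precisely where the closed case differs from the open one. The paper's proof has a second half devoted to exactly this: when no global lift exists, one first invokes \ref{MAL} to arrange that the homotopy $G_t$ is constant on a neighborhood $Op(K)$ of the $(2n-1)$-skeleton $K$ of a fine triangulation of $V$; the remaining task is then to construct $f_t$ over each top-dimensional simplex $\Delta$ keeping it fixed on $Op(\partial\Delta)$, and over a sufficiently small simplex the restriction $(G_1)|_\Delta$ \emph{does} lift to $\bar G_1:\Delta\to\bar A$ (choose $\hat L\in\hat A$ containing $G_1$ at one point of $\Delta$ and tilt it continuously, using that $\hat A$ is open), after which your argument applies cell by cell. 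Without this localization step, what you have proved is the theorem under the additional hypothesis that $G_1$ admits a global lift to $\bar A$ --- which is exactly the conditional case the paper treats first before removing the hypothesis.

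Your secondary worry about exactness is legitimate, and the paper's own proof is silent on the same point; but your proposed remedy --- that the local machinery ``only requires exactness on a neighbourhood of the positive-codimension core $V$'' --- does not hold as stated, since any neighborhood of $V$ in the thickening deformation retracts onto $V$, so $[\omega_V]\neq 0$ obstructs exactness there as well. Repairing this would require revisiting \ref{RAT}, where exactness enters through the contactization, not merely relocating where \ref{SAI} is quoted.
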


 \begin{proof}
  Let $Sp_{m,n}(W)$ be the manifold of all $(2m,2n)$-symplectic flags on $W$. Further let us denote the natural projections by \[Sp_{m,n}(W)\stackrel{\hat{P}}\rightarrow Sp_{2m}(W)\ and\ Sp_{m,n}(W)\stackrel{P}\rightarrow Sp_{2n}(W)\] Set \[\bar{A}=\{(\hat{L},L):\hat{L}\in \hat{A}, L\in Sp_{2n}(\hat{L})\}\subset Sp_{m,n}(W)\] In the above $\hat{A}$ exists by $m$-completeness. Note that $\hat{P}(\bar{A})=\hat{A},\ P(\bar{A})=A$. Let $G_t:V\rightarrow Sp_{2n}(W)$ be the homotopy between the tangential lift $G_0=Gdf_0$ of the isosymplectic immersion $f_0$ and the map $G_1:V\rightarrow A$. Let the map $G_1$ lifts to a map $\bar{G}_1:V\rightarrow \bar{A}\subset Sp_{m,n}(W)$. Then the homotopy $G_t$ lifts to a homotopy $\bar{G}_t:V\rightarrow Sp_{m,n}(W)$ by \ref{FP}. We have $G_t=P \circ \bar{G}_t$. 
Set $\hat{G}_t=\hat{P}\circ \bar{G}_t,\ t\in I$. Let $N$ be the total space of the vector bundle over $V$ whose fiber over a point $v\in V$ is the normal space to $G_1(v)$ in $\hat{G}_1(v)$. \\

Now extend the isosymplectic immersion $f_0$ to an immersion $F:Op_N(V)\rightarrow W$ such that $GdF_{\mid V}=(\hat{G}_0)_{\mid V}$. Consider $F^*(\omega_W)$, observe that $F^*(\omega_W)_{\mid V}=\omega_V$. Let $\tilde{\omega}$ be the symplectic structure on $Op_N(V)$ as in (9.3.2) \cite{MR1909245}. As $Op_N(V)$ is arbitrarily small we can assume that the linear homotopy $\omega_t$ joining $F^*(\omega_W)\ and\ \tilde{\omega}$ consist of symplectic structures only. So by symplectic stability theorem (9.3.2) \cite{MR1909245} there exists an isotopy $\phi_t:Op_N(V)\rightarrow Op_N(V)$ such that $\phi_t^*(F^*(\omega_W))=\omega_t\ and\ (\phi_t)_{\mid V}=id_V$. Set $\tilde{f}_0=F\circ \phi_1$. So we have constructed an isosymplectic immersion $\tilde{f}_0$ extending $f_0\ such\ that\ (Gd\hat{f}_0)_{\mid V}=(\hat{G}_0)_{\mid V}$. Hence by \ref{SAI} we can construct $\hat{f}_t:Op_N(V)\rightarrow W$ such that $\hat{f}_1$ is an $\hat{A}$-directed immersion. Then $f_t=(\hat{f}_t)_{\mid V}$ is the required one.\\

Now in general $G_1$ does not lift to $\bar{G}_1$. By \ref{MAL} we can assume that the homotopy $G_t$ is constant on a neighborhood $Op(K)$ of the $(2n-1)$-skeleton of some triangulation of $V^{2n}$. So we only need to construct $f_t$ on the top simplex $\Delta$ of the triangulation keeping $f_t$ fixed on $Op(\partial \Delta)$. If the triangulation is sufficiently small then $(G_1)_{\mid \Delta}$ lifts to $\bar{G}_1:\Delta \rightarrow \bar{A}$ and one can apply the previous argument.  
 \end{proof}
 
 \section{Application To Poisson Geometry} Let $(V^{2n},\omega_V=d\alpha_V)$ be a symplectic manifold and $(W^{2q},\omega_W=d\alpha_W)$ be another symplectic manifold such that there is a codimension $2k$ symplectic foliation $\mathcal{F}$ on $W$ with $q>n>k$. By a symplectic foliation we mean that $(\omega_W^{q-k})_{\mid T\mathcal{F}}$ is nowhere vanishing. Such a $W$ admits a regular Poisson structure. The foliation is known as the characteristic foliation. In this section we study the existence of a regular Poisson structures on a symplectic manifold $(V^{2n},\omega_V=d\alpha_V)$ with codimension $2k$ characteristic foliation induced from the symplectic structure $\omega_V$.\\
 
 \begin{theorem}
  \label{LT}
  Let $(V^{2n},\omega_V=d\alpha_V)$, $(W^{2q},\omega_W=d\alpha_W)$ and $\mathcal{F}$ be as above. Set \[A=\{L\in Sp_{2n}(W): L\pitchfork T\mathcal{F}\}\] Let there exists an isosymplectic immersion $f:V\rightarrow W$ whose tangential lift is homotopic to a map $G_1:V\rightarrow A$ through a homotopy $G_t:V\rightarrow Sp_{2n}(W)$ such that $\pi \circ G_t=f$. Then a regular Poisson structure on $V$ exists with codimension $2k$ characteristic foliation which is induced from $\omega_V$ if either $V$ is open or $A$ is $m$-complete for some $m$.  
 \end{theorem}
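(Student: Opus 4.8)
The plan is to reduce the existence problem to the symplectic $A$-directed immersion theorems already established, and then to manufacture the Poisson structure on $V$ by pulling back the characteristic foliation $\mathcal{F}$ along the resulting immersion. First I would observe that the hypotheses are exactly those needed to invoke the $A$-directed machinery. The set $A=\{L\in Sp_{2n}(W):L\pitchfork T\mathcal{F}\}$ is open in $Sp_{2n}(W)$, since both ``symplectic'' and ``transverse to $T\mathcal{F}$'' are open conditions on a $2n$-plane, and we are given an isosymplectic immersion $f$ whose tangential lift $Gdf$ is homotopic, through a homotopy $G_t$ covering $f$, to a map $G_1:V\to A$. Hence if $V$ is open I apply Theorem \ref{SAI}, and if $V$ is closed and $A$ is $m$-complete I apply Theorem \ref{GCM}; in either case $f$ is homotopic through isosymplectic immersions to an $A$-directed isosymplectic immersion $f_1:V\to W$. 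By construction $f_1^*\omega_W=\omega_V$ and $df_1(T_vV)\in A$ for every $v\in V$, so each tangent plane $df_1(T_vV)$ is a symplectic $2n$-plane transverse to $T\mathcal{F}$.

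Next I would pull back the foliation. Since $f_1$ is transverse to $\mathcal{F}$ at every point, the $f_1$-preimages of the leaves of $\mathcal{F}$ assemble into a codimension $2k$ foliation $\mathcal{G}:=f_1^*\mathcal{F}$ of $V$, integrability being automatic from transversality (locally $\mathcal{F}$ is the fibration of a submersion onto a $2k$-dimensional transversal, and its composite with $f_1$ is again a submersion). The tangent distribution of $\mathcal{G}$ is $T_v\mathcal{G}=(df_1)_v^{-1}\big(T_{f_1(v)}\mathcal{F}\big)$, so that $(df_1)_v(T_v\mathcal{G})=df_1(T_vV)\cap T_{f_1(v)}\mathcal{F}$, a subspace of dimension $2(n-k)$.

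The crux of the argument, and the step I expect to be the main obstacle, is to show that the leaves of $\mathcal{G}$ are symplectic submanifolds of $(V,\omega_V)$. Because $f_1$ is isosymplectic, for $x,y\in T_v\mathcal{G}$ one has $\omega_V(x,y)=\omega_W\big(df_1(x),df_1(y)\big)$, so $\omega_V$ restricts nondegenerately to $T_v\mathcal{G}$ precisely when $\omega_W$ restricts nondegenerately to $df_1(T_vV)\cap T_{f_1(v)}\mathcal{F}$. This is the geometric content that must be carried by membership in $A$: a symplectic $2n$-plane meeting the symplectic distribution $T\mathcal{F}$ transversally in a \emph{symplectic} subspace. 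I would therefore make explicit, as part of the defining condition of $A$ (which remains open), that for $L\in A$ the intersection $L\cap T\mathcal{F}$ is nondegenerate. This is the delicate point, since transversality of two symplectic subspaces does not by itself force their intersection to be symplectic, and it is exactly here that the precise meaning of the transversality condition defining $A$ is used.

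Finally I would assemble the Poisson structure. A codimension $2k$ symplectic foliation $\mathcal{G}$ on $(V,\omega_V)$ determines a regular Poisson bivector $\pi_V$ of rank $2(n-k)$: on each leaf $\Lambda$ the restriction $\omega_V|_{\Lambda}$ is a genuine symplectic form by the previous step, and inverting it leafwise produces a bivector that varies smoothly with the point and satisfies the Jacobi identity leaf by leaf, hence globally. The symplectic leaves of $\pi_V$ are then the leaves of $\mathcal{G}$, so its characteristic foliation has codimension $2k$ and is induced from $\omega_V$, which is the asserted structure.
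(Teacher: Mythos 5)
Your proposal follows the same route as the paper's own (three-line) proof: observe that $A$ is open, invoke Theorem \ref{SAI} in the open case or Theorem \ref{GCM} in the closed $m$-complete case to replace $f$ by an $A$-directed isosymplectic immersion $f_1$, and pull $\mathcal{F}$ back along $f_1$ to obtain the desired foliation on $V$. The noteworthy difference is that the ``delicate point'' you isolate is a genuine gap which the paper's proof passes over in silence: the paper simply asserts that $f_1\pitchfork\mathcal{F}$ makes $f_1^*\mathcal{F}$ a \emph{symplectic} foliation on $(V,\omega_V)$. As you observe, this is not automatic. Since $f_1$ is isosymplectic, nondegeneracy of $\omega_V$ on $T(f_1^*\mathcal{F})=(df_1)^{-1}(T\mathcal{F})$ is equivalent to nondegeneracy of $\omega_W$ on $df_1(TV)\cap T\mathcal{F}$, and a symplectic $2n$-plane can meet a symplectic distribution transversally in a degenerate subspace. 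Concretely, in $\bigl(\mathbb{R}^6,\ \sum_{i=1}^{3}dx_i\wedge dy_i\bigr)$ take $F=\mathrm{span}(\partial_{x_1},\partial_{y_1},\partial_{x_2},\partial_{y_2})$ and $L=\mathrm{span}(\partial_{x_1},\partial_{x_2},\partial_{x_3}+\partial_{y_1},\partial_{y_3}+\partial_{y_2})$: both are symplectic $4$-planes and $L+F=\mathbb{R}^6$, yet $L\cap F=\mathrm{span}(\partial_{x_1},\partial_{x_2})$ is isotropic. So membership in $A$ as literally defined in the statement (transversality alone) does not yield the conclusion, either in your write-up or in the paper's. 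Your repair --- adding to the definition of $A$ the requirement that $L\cap T\mathcal{F}$ be a symplectic subspace, which is still an open condition --- is the natural one, and with it the rest of your argument (pulling back the foliation, leafwise inversion of $\omega_V$ to produce the regular Poisson bivector) is sound and standard. Be aware, though, that this repair amends the statement being proved: the homotopy hypothesis $G_1:V\rightarrow A$ must now be imposed for the smaller $A$ (and, in the closed case, $m$-completeness must be assumed of that smaller set), so what you prove is the theorem for the strengthened $A$, not verbatim the theorem as stated.
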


 \begin{proof}
  Observe that $A$ is open. So by \ref{SAI} or by \ref{GCM} there exists an isosymplectic immersion $f_1:V\rightarrow W$ such that $f_1\pitchfork \mathcal{F}$. So $f_1^*\mathcal{F}$ is a symplectic foliation on $(V,\omega_V)$. 
 \end{proof}

{\bf Acknowledgement.} I would like to thank Dr. M. Datta for her suggestions and D. Pancholi for a major Clarification.

\end{document}